\documentclass[11pt]{amsart}
\usepackage{amsmath}
\usepackage{amssymb}
\usepackage{graphicx}
\usepackage{url}
\usepackage{multirow}
\usepackage{algorithm}
\usepackage{algorithmic}
\usepackage{float}
\usepackage{comment}
\usepackage[font=small,labelfont=bf]{caption}
\usepackage{subcaption}
\usepackage{tabularx}
\usepackage{color}
\usepackage{hyperref}

\newtheorem{theorem}{Theorem}[section]

\theoremstyle{definition}
\newtheorem{definition}[theorem]{Definition}
\newtheorem{example}[theorem]{Example}
\newtheorem{remark}[theorem]{Remark}

\numberwithin{equation}{section}

\title[Higher degree inexact model] {Higher Degree Inexact Model for Optimization Problems}

\author[M.~Alkousa]{Mohammad S. Alkousa}
\address[M.~S.~Alkousa]{Moscow Institute of Physics and Technology, Russia.}
\email{\tt mohammad.alkousa@phystech.edu}

\author[F.~Stonyakin]{Fedor~S.~Stonyakin}
\address[F.~S.~Stonyakin]{Moscow Institute of Physics and Technology and V. I. Vernadsky Crimean Federal University, Russia.}
\email{{\tt fedyor@mail.ru}}

\author[A.~Gasnikov]{Alexander~V.~Gasnikov}
\address[A.~V.~Gasnikov]{Moscow Institute of Physics and Technology, Innopolis University and  Caucasus Mathematical Center of Adygh State University, Russia.}
\email{{\tt gasnikov@yandex.ru}}

\author[A.~Abdo]{Asmaa~M.~Abdo}
\address[A.~M.~Abdo]{Mathematics Department, Faculty of Science, Damascus University, Damascus, Syria.}
\email{{\tt asmaa.abdo@damascusuniversity.edu.sy}}

\author[M.~Alcheikh]{Mohammad~M.~Alcheikh}
\address[M.~M.~Alcheikh]{Mathematics Department, Faculty of Science, Damascus University, Damascus, Syria.}
\email{{\tt mohammad.alcheikh@damascusuniversity.edu.sy}}

\keywords{Inexact model, Inexact oracle, Adaptive gradient method, Fast gradient method, Universal Fast Gradient Method, Convex optimization, Saddle point, Variational inequality.}

\begin{document}

\begin{abstract}
In this paper, it was proposed a new concept of the inexact higher degree $(\delta, L, q)$-model of a function that is a generalization of the inexact $(\delta, L)$-model \cite{Gasnikov2019fast}, $(\delta, L)$-oracle \cite{Devolder2013smooth} and $(\delta, L)$-oracle of degree $q \in [0,2)$ \cite{Nabou2024qoracle}. Some examples were provided to illustrate the proposed new model. Adaptive inexact gradient and fast gradient methods for convex and strongly convex functions were constructed and analyzed using the new proposed inexact model. A universal fast gradient method that allows solving optimization problems with a weaker level of smoothness, among them non-smooth problems was proposed. For convex optimization problems it was proved that the proposed gradient and fast gradient methods could be converged with rates $O\left(\frac{1}{k} + \frac{\delta}{k^{q/2}}\right)$ and $O\left(\frac{1}{k^2} + \frac{\delta}{k^{(3q-2)/2}}\right)$, respectively. For the gradient method, the coefficient of $\delta$ diminishes with $k$, and for the fast gradient method, there is no error accumulation for $q \geq 2/3$. It proposed a definition of an inexact higher degree oracle for strongly convex functions and a projected gradient method using this inexact oracle. For variational inequalities and saddle point problems, a higher degree inexact model and an adaptive method called Generalized Mirror Prox to solve such class of problems using the proposed inexact model were proposed. Some numerical experiments were conducted to demonstrate the effectiveness of the proposed inexact model, we tested the universal fast gradient method to solve some non-smooth problems with a geometrical nature.
\end{abstract}

\maketitle

\section{Introduction} \label{sec:Introduction}

With the increase in the number of applications that can be modeled as large (or huge) scale optimization problems (some of such applications arising in machine learning, deep learning, data science, control, signal processing, statistics \cite{Beck2017,Bottou2018machine}, and so on)  first-order methods, which require low iteration cost as well as low memory storage, have received much interest over the past few decades to solve the optimization problems when accuracy requirements are not high. 

When the objective function is smooth, the simplest numerical schemes to be considered are the gradient method and its variants. It is known that these methods converge to a solution of the problem with rate $O(1/k)$, where $k$ is the counter of iterations.  However, it is well-known that in the black-box framework \cite{Nemirovskii1983Complexity}, the optimal convergence rate for first-order methods is $O(1/k^2)$,  such optimal methods (which are called Fast Gradient Methods (FGM)) have been developed for many different classes of problems since 1983 \cite{Nesterov1983,Nesterov_book,Nesterov1988approach}. 

These methods (i.e., gradient-type methods) are constructed using some model of the objective function $f$ at the current iterate $x_k$. This can be a quadratic model based on the $L$-smoothness of $f$, i.e.,
\begin{equation}\label{eq:quadr_model}
    f(x_k) + \langle \nabla f(x_k), x-x_k \rangle + \frac{L}{2}\|x-x_k\|^2,
\end{equation}
where $\|\cdot\|$ denotes the standard Euclidean norm.
The scheme of the gradient method is obtained by the minimization of this model \cite{Nesterov_book}.  More general models are constructed based on regularized second-order Taylor expansion \cite{nesterov2006cubic} or other Taylor-like models \cite{drusvyatskiy2019nonsmooth} as well as other objective surrogates \cite{mairal2013optimization}. Another example is the conditional gradient method \cite{frank1956algorithm}, where a linear model of the objective is minimized on every iteration. Adaptive choice of the parameter of the model with provably small computational overhead was proposed in \cite{nesterov2006cubic} and applied to first-order methods in \cite{Nesterov2015Universal,nesterov2013gradient}. Recently, first-order optimization methods were generalized to the so-called relative smoothness framework \cite{bauschke2016descent,lu2018relatively,ochs2017non}, where $\frac{1}{2} \|x-x_k\|^2$ in the quadratic model \eqref{eq:quadr_model} for the objective is replaced by general Bregman divergence.

Standard analysis of first-order methods assumes the availability of exact first-order information. Namely, the oracle must provide at each given point the exact values of the function and its gradient. However, in many problems, including those obtained by smoothing techniques \cite{Nesterov2005Smooth}, the objective function and its gradient are computed by solving another auxiliary optimization problem. In practice, we are often only able to solve these subproblems approximately. Hence, in that context, numerical methods solving the outer problem are provided with inexact first-order information. This led us to investigate the behavior of first-order methods working with an inexact oracle. Optimization algorithms with inexact first-order oracles are well-studied in the literature \cite{Cohen2018Acceleration,Devolder2013smooth,Devolder2013strongly,Devolder2013disser,Dvurechensky2016Stochastic,Dvurechensky2022gradient,Aspremont2008Smooth}.

In \cite{Devolder2013smooth}, authors introduced the so-called inexact first-order $(\delta, L)$-oracle for the function $f$, at a given point $y \in Q$, where $Q$ is a convex set, i.e., one can compute a pair $(f_{\delta, L}(y), g_{\delta, L}(y))$, such that
\begin{equation}\label{ew21wer5}
    0 \leq f(x)-\left(f_{\delta, L}(y)+\left\langle g_{\delta, L}(y), x-y\right\rangle\right) \leq \frac{L}{2}\|x-y\|^2+\delta,  \quad \forall x \in Q, 
\end{equation}
and they considered a classical (primal) gradient method and a fast gradient method (FGM) with inexact oracle. The convergence rates for these methods are $O\left(\frac{1}{k} + \delta\right)$ and $O\left(\frac{1}{k^2} + k \delta\right)$, respectively. One can notice that for the classical (non-accelerated) method, the objective function accuracy decreases with $k$ and asymptotically tends to $\delta$, while in the accelerated scheme, there is an error accumulation.

To generalize the concept of $(\delta, L)$-oracle \eqref{ew21wer5}, in \cite{Gasnikov2019fast} a new concept of $(\delta, L)$-model of a function was proposed. That is, the pair $(f_{\delta, L} (y), \psi_{\delta, L} (x, y))$ is called a $(\delta, L)$-model of the function $f(x)$ at the given point $y \in Q$, if it holds the following inequality
\begin{equation}\label{inex_model_Gasnikov}
    f(x) - \left( f_{\delta, L} (y) + \psi_{\delta, L} (x, y)\right) \leq \frac{L}{2} \|x - y\|^2 + \delta, \quad \forall x \in Q,
\end{equation}
and $\psi_{\delta, L} (x, y)$  is convex in $x$, satisfies $\psi_{\delta, L}(x,x) = 0$  for all $x \in Q$. Note that this concept (i.e., \eqref{inex_model_Gasnikov}) generalizes the $(\delta, L)$-oracle concept \eqref{ew21wer5}, where it is enough to take $\psi_{\delta, L} (x, y) = \left\langle g_{\delta, L}(y), x-y\right\rangle.$ Within this concept, the gradient descent and fast gradient descent methods are constructed and it was shown that constructs of many known methods (composite methods, level methods, conditional gradient, and proximal methods) are particular cases of the methods proposed in \cite{Gasnikov2019fast}. A more generalization of the results in \cite{Gasnikov2019fast} was conducted in \cite{Stonyakin2021OMS}, where authors presented a unified view on inexact models for optimization problems, variational inequalities, and saddle-point problems. In \cite{Gasnikov2019fast,Stonyakin2021OMS} it was proved convergence rates of many gradient-type methods, which cover the known results by $(\delta, L)$-oracle, i.e., $O\left(\frac{1}{k} + \delta\right)$ and $O\left(\frac{1}{k^2} + k \delta\right)$. 

Recently, in \cite{Nabou2024qoracle} it was introduced the concept of the inexact first-order oracle of degree $q \in [0,2)$ for minimization (possibly non-convex) problems. With this concept (see Definition \ref{def_neq_inex_model_non_convex} and Remark \ref{remark_yassine_special}), and for convex optimization problems it was proved that the classical inexact gradient method and inexact fast gradient method can be converged with rates $O\left(\frac{1}{k} + \frac{\delta}{k^{q/2}}\right)$ and $O\left(\frac{1}{k^2} + \frac{\delta}{k^{(3q-2)/2}}\right)$, respectively. Note that for the inexact gradient method, the coefficient of $\delta$ diminishes with $k$, and for the inexact fast gradient method there is no error accumulation for $q \geq 2/3$.

In this paper, we generalize the results of \cite{Nabou2024qoracle}, by proposing a higher degree inexact model (see Definitions \ref{def_neq_inex_model_non_convex}, and \ref{def_neq_inex_model_convex}). This model is also a generalization of the inexact model proposed in \cite{Gasnikov2019fast,Stonyakin2021OMS} (see also \eqref{inex_model_Gasnikov}). We proposed adaptive gradient and fast gradient methods (Algorithms \ref{alg_AdapGM} and \ref{alg2_FGM}) with the proposed inexact higher degree model for smooth convex functions as well as for strongly convex functions by using the technique of restarts. With the proposed inexact higher degree model we construct a universal fast gradient method for solving problems with a weaker level of smoothness. In addition, for the convex functions we defined a higher degree inexact oracle (see Definition \ref{def_q_oracle_strongly}), which generalizes the Devolder-Glineur-Nesterov $(\delta, L, \mu)$-oracle proposed in \cite{Devolder2013strongly}. We also adapted the proposed inexact higher degree model for variational inequalities and saddle point problems and proposed an adaptive algorithm called Generalized Mirror Prox for variational inequalities with $(\delta, L, q)$-model (see Algorithm \ref{Alg:UMPModel}).

\subsection{ Contributions}
\begin{itemize}
    \item We introduce an inexact higher degree $(\delta, L, q)$-model for convex and non-convex optimization problems and obtain convergence the rate for an adaptive inexact gradient method for optimization problems with this model.

    \item We obtain convergence rates for an adaptive inexact fast gradient method (FGM) for optimization problems with a $(\delta, L, q)$ model. Using the technique of restarts of FGM with the proposed higher degree inexact model we obtain a convergence rate of the restarted method for strongly convex problems. Also, using the FGM we construct a universal fast gradient method UFGM.

    \item We introduce an inexact higher degree $(\delta, L, \mu, q)$-oracle for strongly convex optimization problems and obtain a convergence rate of the inexact gradient method with this oracle without using the technique of restarting any other algorithms.

    \item  We introduce an inexact higher degree $(\delta, L, q)$-model for variational inequalities and saddle-point problems and obtain convergence rates for adaptive versions of the Generalized Mirror–Prox algorithm for problems with this model.

    \item We conduct some numerical experiments for testing the proposed UFGM with the proposed inexact model for non-smooth optimization problems: the best approximation problem and the Fermat-Torricelli-Steiner problem.     
\end{itemize}

\subsection{Paper Organization}
The paper consists of an introduction and 9 main sections.  In Sect. \ref{sect_basics} we mentioned the statement of the considered problem and the connected fundamental concepts.  Sect. \ref{sect:def_model_q} devoted to the definitions of the inexact higher degree model for non-convex and convex functions, we provided some examples to illustrate the proposed definitions of the new inexact model. In Sect. \ref{sect_adaptiveGM} we proposed an adaptive gradient method with the proposed inexact higher degree model for smooth convex functions.  In Sect.  \ref{sec:FGM} we proposed an adaptive fast gradient method with the proposed inexact higher degree model for smooth convex functions and strongly convex functions. We also proposed a universal fast gradient, which allows us to solve optimization problems with a weaker level of smoothness, among them non-smooth problems. In Sect. \ref{sec:higher_deg_oracle}, we defined the higher degree inexact $(\delta, L, \mu, q)$-oracle for convex functions and proposed a projected gradient method using this proposed inexact oracle. In Sections \ref{Sect:VI} and \ref{Sect:SPP}, we defined an inexact higher degree model for variational inequalities and saddle point problems, respectively. We proposed an adaptive method (called Generalized Mirror Prox) with the new inexact model and we analyzed this method for variational inequalities and saddle point problems. In Sect. \ref{sect_numerical} we presented the results of some numerical experiments, these results demonstrate the effectiveness of the proposed inexact model. We tested the universal fast gradient method to solve two non-smooth problems (the best approximation problem and the Fermat-Torricelli-Steiner problem). Section \ref{sect:Conclusion} concludes the paper, in which we summarize the concluded results.   

\section{Problem Statement and Fundamentals }\label{sect_basics}

In this paper, we consider the following optimization problem
\begin{equation}\label{main_min_prob}
    \min_{x \in Q} f(x),
\end{equation}
where $Q \subseteq \mathbb{R}^n$ is a convex compact set and  $f: Q \longrightarrow \mathbb{R}$ is a smooth function, i.e., there exist $L>  0$, such that  
\begin{equation}\label{smoothness_cond}
    f(x) \leq f(y)+ \left\langle\nabla f(y), x - y\right\rangle + \frac{L}{2} \|x - y\|^2, \quad \forall x, y \in Q,
\end{equation}
(here and everywhere in the paper we use $\|\cdot\|$ to denote the standard Euclidean norm) or equivalently 
\begin{equation}\label{smoothness_cond2}
    \|\nabla f(x) - \nabla f(y)\| \leq L_f \|x - y\|, \quad \forall x, y \in Q.
\end{equation}

The function $f: Q \longrightarrow \mathbb{R}$ is $\mu$-strongly convex, for some $\mu >  0$,  if it holds
\begin{equation}\label{eq:str_cvx}
    f(x) \geq f(y) + \langle \nabla f(y),x - y \rangle + \frac{\mu}{2}\|x - y\|^2, \quad  \forall x, y \in Q. 
\end{equation}

When $\mu = 0$ in \eqref{eq:str_cvx}, the function $f$ will be a convex.

\section{Inexact $(\delta, L, q)$-model: Definitions and Examples}\label{sect:def_model_q}

\begin{definition}\label{def_neq_inex_model_non_convex}
Let $\delta \geq 0, L>$ and $q \in [0,2)$. The pair $(f_{\delta, L, q} (y), \psi_{\delta, L ,  q} (x, y))$ is called a $(\delta, L, q)$-model of degree $q$ of the function $f(x)$ at the given point $y \in Q$, if it holds the following inequality
\begin{equation}\label{ineq_inex_model_1}
    f(x) - \left( f_{\delta, L, q} (y) + \psi_{\delta, L, q} (x, y)\right) \leq \frac{L}{2} \|x - y\|^2 + \delta \|x - y\|^q, \quad \forall x \in Q,
\end{equation}
and $\psi_{\delta, L,  q} (x, y)$  is convex in $x$, satisfies $\psi_{\delta, L ,  q}(x,x) = 0$  for all $x \in Q$. 

For the sake of brevity, we will say that $\psi_{\delta, L ,  q} (x, y))$ is a $(\delta, L, q)$-model of degree $q$ of the function $f(x)$ at the given point $y \in Q$, instead of the pair $(f_{\delta, L, q} (y), \psi_{\delta, L ,  q} (x, y))$. 
\end{definition}

When the function $f$  is convex, we consider the following modified definition. 

\begin{definition}\label{def_neq_inex_model_convex}
Let $f: Q \longrightarrow \mathbb{R}$ be a convex function, $\delta \geq 0, L>$ and $q \in [0,2)$. The pair $(f_{\delta, L, q} (y), \psi_{\delta, L ,  q} (x, y))$ is called a $(\delta, L, q)$-model of degree $q$ of the function $f(x)$ at the given point $y \in Q$, if it holds the following inequalities
\begin{equation}\label{ineq_inex_model_convex}
    0 \leq f(x) - \left( f_{\delta, L, q} (y) + \psi_{\delta, L, q} (x, y)\right) \leq \frac{L}{2} \|x - y\|^2 + \delta \|x - y\|^q, \quad \forall x \in Q,
\end{equation}
and $\psi_{\delta, L,  q} (x, y)$  is convex in $x$, satisfies $\psi_{\delta, L ,  q}(x,x) = 0$  for all $x \in Q$. 
\end{definition}

\begin{remark}\label{remark_yassine_special}
Note that the inexact $(\delta, L)$-model of a function, which is considered in \cite{Gasnikov2019fast,Stonyakin2021OMS} is a $(\delta, L, 0)$-model in the sense of  Definitions \ref{def_neq_inex_model_non_convex} and \ref{def_neq_inex_model_convex}.

Now, let $\delta \geq 0, L>, q \in [0, 2)$, and  $y \in Q$ is a given point. Let us set  $\psi_{\delta, L ,  q} (x, y) = \left\langle g_{\delta, L, q} (y),  x - y \right\rangle$, where $g_{\delta, L, q}(y) \in \mathbb{R}^n$, then, form \eqref{def_neq_inex_model_non_convex} (or \eqref{ineq_inex_model_convex}), we have 
\[
    f(x) - \left( f_{\delta, L, q} (y) + \left\langle g_{\delta, L, q} (y),  x - y \right\rangle\right) \leq \frac{L}{2} \|x - y\|^2 + \delta \|x - y\|^q, \quad \forall x \in Q. 
\]

Thus, we get the definition of the inexact first-order $(\delta, L)$-oracle of degree $q$, which was proposed in \cite{Nabou2024qoracle}.
\end{remark}

\begin{remark}\label{rem_ordin_model}
Since (see \cite{Nabou2024qoracle})
\begin{equation}\label{eq_1242}
    \delta \|x - y\|^q \leq \frac{q \rho \|x - y\|^2}{2} + \frac{(2-q) \delta^{\frac{2}{2-q}}}{2 \rho^{\frac{q}{2-q}}}, \quad \forall \rho >0, q \in [0,2),
\end{equation}
then from \eqref{def_neq_inex_model_non_convex} (or \eqref{ineq_inex_model_convex}), we get the following
\begin{equation}\label{spesial_case_model}
    0 \leq f(x) - \left( f_{\delta, L, q} (y) + \psi_{\delta, L, q} (x, y)\right) \leq \frac{\widehat{L}}{2} \|x - y\|^2 + \widehat{\delta},
\end{equation}
where
\begin{equation}\label{parameters_spec}
   \widehat{L} = L + q \rho \quad \text{and} \quad \widehat{\delta} = \frac{(2-q) \delta^{\frac{2}{2-q}}}{2 \rho^{\frac{q}{2-q}}}. 
\end{equation}

Thus, $\psi_{\delta,L, q} (x, y)$ which is given in Definitions \ref{def_neq_inex_model_non_convex} and \ref{def_neq_inex_model_convex}, represents a $(\widehat{\delta}, \widehat{L})$-model of the function $f(x)$ at a given point $y \in Q$, for any $x \in Q$, in a sense of \cite{Gasnikov2019fast}. 
\end{remark}

Next, we list some examples to illustrate the proposed inexact higher degree model in Definitions \ref{def_neq_inex_model_non_convex} and \ref{def_neq_inex_model_convex}.  

\begin{example}\label{ex:relative_error} (\textit{Relative inexactness of the gradient}).

In many applications, instead of access to the exact gradient $\nabla f(x)$ of the objective function $f$ at a point $x \in Q$, we access only to its inexact approximation $\widetilde{\nabla} f(x)$. Typical examples of such applications include gradient-free (or zeroth-order) methods which use a gradient estimator based on finite differences \cite{Berahas2021theoretical,Conn2009Introduction,Risteski2016Algorithms}, and optimization problems in infinite-dimensional spaces related to inverse problems \cite{Kabanikhin2011,Gasnikov2017Hilbert}. 

One of the most popular definitions of the gradient inexactness in practice is  \cite{Polyak1987book}
\begin{equation}\label{relative_noise}
    \left\|\widetilde{\nabla} f(x) - \nabla f(x)\right\| \leq \alpha \|\nabla f(x)\|, \quad \text{ for some } \alpha \in (0, 1). 
\end{equation}

This means, that an additive error in the gradient is proportional to the gradient norm, rather than being globally bounded by some small quantity. It is called the relative inexactness of the gradient (see Example \ref{ex1_inexact} for the absolute inexactness of the gradient).

Let $y \in Q$ be a given point, from \eqref{smoothness_cond} and \eqref{relative_noise}, for any $ x \in Q$, we have 
\begin{align*}
    f(x) & \leq f(y) + \left\langle \widetilde{\nabla}   f(y), x - y \right\rangle + \frac{L_f}{2} \left\|x - y \right\|^2  + \left\langle \nabla f(y) - \widetilde{\nabla} f(y),  x  - y \right\rangle
    \\& \leq f(y) + \left\langle \widetilde{\nabla}  f(y), x - y  \right\rangle + \frac{L_f}{2} \|x -y \|^2  + \left\| \widetilde{\nabla} f(y) - \nabla f(y)\right\| \cdot  \| x  - y \|
    \\& \leq f(y) + \left\langle \widetilde{\nabla}  f(y), x - y  \right\rangle + \frac{L_f}{2} \|x - y \|^2 + \alpha \left\| \nabla f(y)\right\| \cdot \|  x - y \|.
\end{align*}
But, from \eqref{relative_noise}, we get the following
\begin{equation*}
    (1-\alpha) \left\|\nabla f(x)\right\| \leq \left\|\widetilde{\nabla} f(x)\right\|  \leq (1+\alpha) \left\|\nabla f(x)\right\|, \quad \forall x \in Q.
\end{equation*}
From this, we have 
\begin{equation*}
    \left\|\nabla f(x)\right\| \leq \frac{1}{1-\alpha} \left\|\widetilde{\nabla} f(x)\right\|, \quad \forall x \in Q.
\end{equation*}

Therefore,  we get 
\begin{align*}
    f(x) & \leq f(y) + \left\langle \widetilde{\nabla}  f(y),  x  - y \right\rangle + \frac{L_f}{2} \|x - y \|^2  + \frac{\alpha}{1-\alpha} \left\|\widetilde{\nabla} f(y)\right\| \cdot \| x - y\|.
\end{align*}
i.e., 
\begin{align*}
    f(x) - \left(f(y) + \left\langle \widetilde{\nabla}  f(y),  x  - y \right\rangle \right)  \leq  \frac{L_f}{2} \|x - y \|^2  + \frac{\alpha}{1-\alpha} \left\|\widetilde{\nabla} f(y)\right \| \cdot \| x - y\|.
\end{align*}

Thus, $\psi_{\delta, L, q} (x,y) =\left\langle \widetilde{\nabla}f(y), x - y \right\rangle $ is a $(\delta, L, q)$-model of the function $f(x)$ at a given point $y \in Q$, with $\delta = \frac{\alpha}{1-\alpha} \left\|\widetilde{\nabla} f(y)\right\|, L = L_f, q = 1$ and $f_{\delta, L, q} (y) = f(y)$. 
\end{example}

The next examples are described by an inexact first-order $(\delta, L)$-oracle of degree $q \in [0, 2)$ in \cite{Nabou2024qoracle}, which can be covered by Definitions  \ref{def_neq_inex_model_non_convex} and \ref{def_neq_inex_model_convex} by a suitable choosing of the pair $\left(f_{\delta,L, q}(y), \psi_{\delta, L, q} (x, y )\right)$. 

\begin{example}\label{ex1_inexact}(\textit{Finite sum optimization and absolute inexactness of the gradient}).

Let $\{f_1, \ldots, f_m\}$ be a set of $m$ functions, each of them is $L_i$-smooth, and let $f(x) = \sum_{i = 1}^{m} f_i(x)$. The function $f$ is an $L_f$-smooth with $L_f=  \sum_{i = 1}^{m} L_i$. The problem \eqref{main_min_prob} with such functions, when $m$ is large enough, is called a finite sum optimization problem. It captures the standard empirical risk minimization problems in machine learning (such as least-squares or logistic regression with e.g. linear predictors or neural networks) \cite{Shwartz2014Understanding} and it appears widely in machine learning applications, including but not limited to deep neural networks, multi-kernel learning \cite{Arora2019Fine,Bottou2018machine,Brutzkus2018SGD,Zou2018Deep}. 

For such problems, it used stochastic gradient-type methods. In these methods, it needs to calculate a so-called mini-bach stochastic gradient of the objective function $f$, i.e.,
\[
    \widetilde{\nabla} f(x) = \frac{1}{|\mathcal{S}|} \sum_{i \in \mathcal{S}} \nabla f_i(x),
\]
where $\mathcal{S} \subseteq \{1, 2, \ldots, m\}$ and $|\mathcal{S}|$ is the size of the mini-bach. 

For this approximation of the exact gradient $\nabla f(x)$, it holds the following inequality
\begin{equation}\label{absolute_error}
   \left \|\nabla f(x) - \widetilde{\nabla} f(x)\right \| \leq \Delta, \quad \text{for some} \quad \Delta >0, 
\end{equation}
with probability at least $1 - \Delta$, if it satisfies $|\mathcal{S}| = O \left( \left(\frac{\Delta^2}{L_f^2} + \frac{1}{m}\right)^{-1}\right)$ (see Lemma 11 in \cite{Agafonov2023}). 

For the function $f$, we have 
\begin{align*}
   & \quad \;\;  f(x) - f(y) - \left \langle\nabla f(y) - \nabla f(y) +  \widetilde{\nabla} f(y)  , x - y \right \rangle  
   \\& = f(x) - f(y) - \left \langle\nabla f(y) , x - y \right \rangle + \left \langle\nabla f(y) -   \widetilde{\nabla} f(y)  , x - y \right \rangle 
   \\& \stackrel{\eqref{smoothness_cond}}{\leq} \frac{L_f}{2} \|x - y\|^2 + \left\|\nabla f(y) -   \widetilde{\nabla} f(y)\right \| \cdot \|x - y\|
   \\& \stackrel{\eqref{absolute_error}}{\leq} \frac{L_f}{2} \|x - y\|^2 + \Delta \|x - y\|.
\end{align*}

Thus, we get
\begin{equation}
    f(x) - \left(f(y) + \left\langle \widetilde{\nabla}f(y), x - y \right\rangle\right) \leq \frac{L_f}{2} \|x - y\|^2 + \Delta \|x - y\|, \quad \forall x \in Q. 
\end{equation}

Therefore, $\psi_{\delta, L, q} (x,y) =\left\langle \widetilde{\nabla}f(y), x - y \right\rangle $ is a $(\delta, L, q)$-model of the function $f(x)$ at a given point $y \in Q$, with $\delta = \Delta, L = L_f, q = 1$ and $f_{\delta, L, q} (y) = f(y)$. 

The inequality \eqref{absolute_error} represents one of the most popular definitions of the gradient inexactness in practice \cite{Polyak1987book}, and it is called the absolute inexactness of the gradient (see Example \ref{ex:relative_error} for another important inexact gradient).  
\end{example}

\begin{remark}
Connecting to the Example \ref{ex1_inexact}, we mention here that in future work, we will study the accelerated and non-accelerated stochastic gradient descent methods for solving smooth (strongly) convex stochastic optimization problems, i.e., finite sum optimization problems, and deriving estimates of the rate of convergence in the proposed higher degree inexact model (see \cite{Dvinskikh2020Accelerated}, where the results here obtained in the model generality with $q = 0$). 
\end{remark}

\begin{example}\label{ex2_inexact}(\textit{Computations at shifted points, see also \cite{Devolder2013smooth}}). 

Let $f: Q \longrightarrow \mathbb{R}$ be an $L_f$-smooth function. Let us assume that we can compute the exact gradient $\nabla f(y)$ at each $y \in Q$,  and the approximated one at a shifted point $\hat{y} \ne y$, such that $\|y - \hat{y}\| \leq \Delta$.  Since $f$ is $L_f$-smooth, we have, for any $x \in Q$
\begin{align*}
    f(x) & \leq f(y) + \langle \nabla f(y), x - y \rangle + \frac{L_f}{2} \| x - y\|^2
    \\& = f(y) + \langle \nabla f(\hat{y}), x - y \rangle +  \frac{L_f}{2} \| x - y\|^2 + \langle \nabla f(y) - \nabla f(\hat{y}), x - y \rangle
    \\& \leq  f(y) + \langle \nabla f(\hat{y}), x - y \rangle +  \frac{L_f}{2} \| x - y\|^2 +  \| \nabla f(y) - \nabla f(\hat{y}) \|_*  \|x - y\|
    \\& \stackrel{\eqref{smoothness_cond2}}{\leq} f(y) + \langle \nabla f(\hat{y}), x - y \rangle +  \frac{L_f}{2} \| x - y\|^2 +  \Delta L_f  \|x - y\|.  
\end{align*}

Thus, we get 
\[
    f(x) - \left(f(y) + \langle \nabla f(\hat{y}), x - y \rangle\right) \leq \frac{L_f}{2} \| x - y\|^2 +  \Delta  L_f \|x - y\|, \quad \forall x \in Q. 
\]

Therefore, $\psi_{\delta, L, q} (x,y) =\langle \nabla f(\hat{y}), x - y \rangle $ is a $(\delta, L, q)$-model of the function $f(x)$ at a given point $y \in Q$, with $\delta = \Delta L_f, L = L_f, q = 1$ and $f_{\delta, L, q} (y) = f(y)$.
\end{example}

\begin{example}\label{ex4_inexact} (\textit{Functions with a weaker level of smoothness, see also \cite{Devolder2013smooth}}).

Let us show that the notion of
$\psi_{\delta, L, q} (x,y)$-model can be useful for solving problems with exact first-order information but with a lower level of smoothness. Let $f$ be a subdifferentiable
function on $Q$. For each $y \in Q$, denote by $\nabla f(y)$ an arbitrary element of the subdifferential $\partial f (y)$. Assume that $f(x)$ has H\"{o}lder-continuous subgradients, i.e., it holds the following  
\begin{equation}\label{holder_cond}
    \|\nabla f(x) - \nabla f(y)\| \leq L_{\nu} \|x - y\|^{\nu}, \quad \forall x, y \in Q,
\end{equation}
where $\nu \in [0, 1]$ is the level of smoothness, and $0 < L_{\nu} <  \infty$. This condition leads to the following inequality
\begin{equation}\label{12021jh}
    f(x) \leq f(y) + \langle \nabla f(y), x - y \rangle + \frac{L_{\nu}}{1+ \nu} \|x - y\|^{1+\nu}, \quad \forall x, y \in Q. 
\end{equation}

Let us fix $\nu \in [0,1]$ and an arbitrary $\delta > 0$. The constant $L$ which depends on $\delta >0$, i.e., $L(\delta)$, such that the following inequality holds
\begin{equation}\label{ewed45s}
    \frac{L_\nu}{1+\nu}\|x-y\|^{1+\nu} \leq \frac{L(\delta)}{2}\|x-y\|^2+\delta\|x-y\|^q, 
\end{equation}
for some $q$, is the following \cite{Nabou2024qoracle}
\[
    L(\delta)=\frac{1+\nu-q}{2-q}\left(\frac{L_\nu}{1+\nu}\right)^{\frac{2-q}{1+\nu-q}}\left(\frac{1-\nu}{\delta(2-q)}\right)^{\frac{1-\nu}{1+\nu-q}}, 
\]
for any $q \in [0,1+ \nu).$

Thus, from \eqref{12021jh} and \eqref{ewed45s}, we have 
\begin{equation*}
    f(x) - \left( f(y) + \langle \nabla f(y), x - y \rangle \right) \leq  \frac{L(\delta)}{2}\|x-y\|^2+\delta\|x-y\|^q, \quad \forall x \in Q. 
\end{equation*}

Therefore, $\psi_{\delta, L, q} (x,y) =\langle \nabla f(y), x - y \rangle $ is a $(\delta, L, q)$-model of the function $f(x)$ at a given point $y \in Q$, with any $\delta >0, L = L(\delta), q \in [0, 1 + \nu)$, and $f_{\delta, L, q} (y) = f(y)$.

If we assume that the function $f(x)$ is convex on $Q$, then we have 
\begin{equation*}
    0 \leq f(x) - \left( f(y) + \langle \nabla f(y), x - y \rangle \right) \leq  \frac{L(\delta)}{2}\|x-y\|^2+\delta\|x-y\|^q. 
\end{equation*}

This means that $\psi_{\delta, L, q} (x,y) =\langle \nabla f(y), x - y \rangle $ also represents a $(\delta, L, q)$-model of the convex function $f(x)$ at a given point $y \in Q$, for any $x \in Q$,  in the sense of Definition \ref{def_neq_inex_model_convex}.

When $\nu = 1$, we get smooth functions (i.e., functions with Lipschitz continuous gradient). For $ \nu < 1$, we get a lower level of smoothness. In particular, when $ \nu = 0$, we obtain functions whose subgradients have bounded variation, and in this case, we see that the Definition \ref{def_neq_inex_model_convex} is convenient, with degree $q \in [0, 1)$, for the class of non-smooth convex optimization problems with bounded subgradient of the objective function. 
\end{example}

\section{Adaptive inexact gradient method with $(\delta, L, q)$-model}\label{sect_adaptiveGM}

In this section, we assume that the objective function $f$ is convex and $L$-smooth. For problem \eqref{main_min_prob}, with $(\delta, L,q)$-model of degree $q$ of convex function $f$, we consider an adaptive inexact gradient-type method, listed as Algorithm \ref{alg_AdapGM}.

\begin{algorithm}[H]
\caption{Adaptive inexact gradient method with $(\delta, L, q)$-model. }
\label{alg_AdapGM}
\textbf{Inputs:} $x_0 \in Q$ is the starting point s.t. $\frac{1}{2}\|x_* - x_0\|^2 \leq R^2$  for some $R>0$,  $L_0 > 0$, $\delta \geq 0$ is the oracle error, $q \in [0, 2)$ is the degree of the oracle.
\hspace*{\algorithmicindent}
\begin{algorithmic}[1]
\FOR{$k \geq 0$}
\STATE Find the smallest integer $i_k\geq 0$ such that
\begin{align}\label{exitLDL_G_S}
    f_{\delta, L, q}(x_{k+1}) & \leq f_{\delta,L, q}(x_{k}) + \psi_{\delta, L,q}(x_{k+1}, x_{k}) + \frac{L_{k+1}}{2} \left \|x_{k+1} - x_k \right\|^2 
    \\& \quad + \delta \left\|x_{k+1} - x_k \right\|^q, \nonumber
\end{align}
where $L_{k+1} = 2^{i_k-1}L_k$.
\STATE Calculate
\begin{equation}\label{equmir2DL_G_S}
    x_{k+1} := {\arg\min_{x \in Q}}\left\{\psi_{\delta , L,q}(x, x_k)  +  \frac{L_{k+1}}{2} \left\|x -  x_k\right \|^2   \right\}.
\end{equation}
\ENDFOR
\end{algorithmic}
\end{algorithm}

For Algorithm \ref{alg_AdapGM}, we have the following result. 

\begin{theorem}\label{theo_Adaptive_GM}
Assume that $\psi_{\delta,L,q}(x,y)$ is a $(\delta, L, q)$-model according to Definition \ref{def_neq_inex_model_convex}. After $N \geq 1$ iterations of Algorithm \ref{alg_AdapGM}, we have
\begin{equation}\label{rate_GM}
    f(\widehat{x}_N) - f(x_*) \leq \frac{2L R^2}{N} + \frac{2 \left(\sqrt{2} R\right)^q}{N^{q/2}} \delta,
\end{equation}
where $\widehat{x}_N = \frac{1}{\sum_{k = }^{N-1} \frac{1}{L_{k+1}}} \sum_{k = 0}^{N-1} \frac{x_{k+1}}{L_{k+1}} $.
\end{theorem}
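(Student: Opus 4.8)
The plan is to follow the standard analysis of the gradient method adapted to the $(\delta, L, q)$-model, tracking the extra $\delta\|x-y\|^q$ term carefully. First I would record the one-step progress inequality. Fix an iteration $k$. Since $x_{k+1}$ minimizes the strongly convex function $x \mapsto \psi_{\delta,L,q}(x,x_k) + \frac{L_{k+1}}{2}\|x-x_k\|^2$ over $Q$, the optimality condition gives, for every $x \in Q$,
\begin{equation*}
    \psi_{\delta,L,q}(x_{k+1},x_k) + \frac{L_{k+1}}{2}\|x_{k+1}-x_k\|^2 \leq \psi_{\delta,L,q}(x,x_k) + \frac{L_{k+1}}{2}\|x-x_k\|^2 - \frac{L_{k+1}}{2}\|x-x_{k+1}\|^2,
\end{equation*}
using the $L_{k+1}$-strong convexity of the quadratic (the term $\psi$ only adds convexity, so the "three-point" inequality holds with constant $L_{k+1}$). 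Combining this with the exit condition \eqref{exitLDL_G_S} and then with the lower bound $f_{\delta,L,q}(x_k) + \psi_{\delta,L,q}(x,x_k) \leq f(x)$ (the left inequality in \eqref{ineq_inex_model_convex}), I get for every $x \in Q$
\begin{equation*}
    f(x_{k+1}) \leq f(x) + \frac{L_{k+1}}{2}\|x-x_k\|^2 - \frac{L_{k+1}}{2}\|x-x_{k+1}\|^2 + \delta\|x_{k+1}-x_k\|^q.
\end{equation*}

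Next I would set $x = x_*$, divide by $L_{k+1}$, and sum over $k = 0, \dots, N-1$. The quadratic terms telescope, leaving $\frac{1}{2}\|x_*-x_0\|^2 \leq R^2$ on the right, so
\begin{equation*}
    \sum_{k=0}^{N-1} \frac{f(x_{k+1}) - f(x_*)}{L_{k+1}} \leq R^2 + \delta \sum_{k=0}^{N-1} \frac{\|x_{k+1}-x_k\|^q}{L_{k+1}}.
\end{equation*}
By convexity of $f$ and the definition of $\widehat{x}_N$, the left-hand side is at least $\left(\sum_k L_{k+1}^{-1}\right)\big(f(\widehat{x}_N) - f(x_*)\big)$, so
\begin{equation*}
    f(\widehat{x}_N) - f(x_*) \leq \frac{R^2 + \delta \sum_{k=0}^{N-1} \|x_{k+1}-x_k\|^q/L_{k+1}}{\sum_{k=0}^{N-1} 1/L_{k+1}}.
\end{equation*}
To finish I need two ingredients: a lower bound on $\sum_k L_{k+1}^{-1}$ and an upper bound on the weighted sum of $\|x_{k+1}-x_k\|^q$. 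For the first, the adaptive rule guarantees $L_{k+1} \leq 2L$ (since \eqref{smoothness_cond} makes the exit condition hold once $L_{k+1} \geq L$), whence $\sum_k L_{k+1}^{-1} \geq N/(2L)$, which already yields the $\frac{2LR^2}{N}$ term.

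The main obstacle is controlling $\sum_{k=0}^{N-1} \|x_{k+1}-x_k\|^q / L_{k+1}$. The key observation is that $x_{k+1}$ is the result of a constrained minimization step anchored at $x_k$, and one shows $\|x_{k+1}-x_k\| \leq \|x_* - x_k\| + \|x_* - x_{k+1}\|$ or, more usefully, a bound tying it to the Bregman/quadratic decrease; but the cleanest route is to use that each step keeps $\frac12\|x_*-x_{k+1}\|^2$ no larger than $\frac12\|x_*-x_k\|^2$ plus the error contribution — actually, from the one-step inequality with $x = x_*$ together with $f(x_{k+1}) \geq f(x_*)$, we get $\frac{L_{k+1}}{2}\|x_*-x_{k+1}\|^2 \leq \frac{L_{k+1}}{2}\|x_*-x_k\|^2 + \delta\|x_{k+1}-x_k\|^q$, and a short induction (absorbing the $\delta$-term via \eqref{eq_1242} with a suitable $\rho$, or arguing directly) gives $\|x_*-x_k\|^2 \leq$ a quantity of order $R^2$ uniformly in $k$; hence $\|x_{k+1}-x_k\| \leq \|x_*-x_k\| + \|x_*-x_{k+1}\| = O(R)$, in fact $\|x_{k+1}-x_k\| \leq 2\sqrt{2}\,R/\sqrt{?}$ — more precisely one shows $\|x_{k+1} - x_k\|^2 \le 2\|x_* - x_k\|^2 + 2\|x_* - x_{k+1}\|^2$ is too lossy, so instead I would bound $\|x_{k+1}-x_k\|^q \leq (\sqrt{2}R)^q \cdot (\text{something summing to } N)$ and use $L_{k+1} \geq L_{k+1} \geq$ — here I would simply use $\|x_{k+1}-x_k\| \le \sqrt{2}\,\|x_* - x_k\| $-type bounds is not quite it either; the honest statement is: using $\|x_*-x_k\| \le \sqrt{2}R$ for all $k$ (proved by the induction above, with the $\delta$ contributions shown to be dominated), each term obeys $\|x_{k+1}-x_k\|^q \le (2\sqrt 2 R)^q$, and dividing by $L_{k+1} \ge L/2$ is the wrong direction — so I instead keep $1/L_{k+1}$ paired: $\sum_k \|x_{k+1}-x_k\|^q/L_{k+1} \le (2\sqrt2 R)^q \sum_k 1/L_{k+1}$, and then the $\delta$-term in the bound for $f(\widehat x_N)-f(x_*)$ becomes exactly $(2\sqrt 2 R)^q \delta$ divided by nothing — which overshoots \eqref{rate_GM} by a constant. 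To recover the stated constant $2(\sqrt2 R)^q/N^{q/2}$ I would instead apply Jensen/power-mean to $\sum_k \|x_{k+1}-x_k\|^q/L_{k+1}$ relative to the weights $1/L_{k+1}$, writing it as $\big(\sum_k L_{k+1}^{-1}\big)\cdot \mathbb{E}_w[\|x_{k+1}-x_k\|^q] \le \big(\sum_k L_{k+1}^{-1}\big)\big(\mathbb{E}_w\|x_{k+1}-x_k\|^2\big)^{q/2}$, and bound $\sum_k L_{k+1}^{-1}\|x_{k+1}-x_k\|^2$ via the telescoping of $\frac{L_{k+1}}{2}\|x-x_k\|^2 - \frac{L_{k+1}}{2}\|x-x_{k+1}\|^2$-type terms — this is where the $1/N^{q/2}$ decay comes from, mirroring exactly the argument in \cite{Nabou2024qoracle}. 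I expect this telescoping-plus-power-mean bookkeeping, and getting the constants to land on $2LR^2$ and $2(\sqrt2 R)^q$, to be the delicate part; everything else is the routine gradient-method template.
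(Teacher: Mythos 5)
Your setup (three-point inequality for the prox step, the exit condition \eqref{exitLDL_G_S}, the left-hand side of \eqref{ineq_inex_model_convex}, telescoping with weights $1/L_{k+1}$, Jensen for $\widehat{x}_N$, and $L_{k+1}\leq 2L$ giving $\sum_k L_{k+1}^{-1}\geq N/(2L)$) is fine and gives
\[
f(\widehat{x}_N)-f(x_*)\ \leq\ \frac{R^2+\delta\sum_{k=0}^{N-1}\|x_{k+1}-x_k\|^q/L_{k+1}}{\sum_{k=0}^{N-1}1/L_{k+1}}.
\]
But the statement's whole content is the second term, and that is exactly where your argument stops being a proof: you never establish that the $\delta$-part of this ratio is bounded by $2(\sqrt{2}R)^q\delta/N^{q/2}$. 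Your own text shows the difficulty — the uniform bound $\|x_{k+1}-x_k\|=O(R)$ only yields a non-decaying constant (as you note), and the power-mean route you fall back on requires $\sum_k\|x_{k+1}-x_k\|^2/L_{k+1}=O(R^2/L)$, which does not follow from the inequalities you have: the natural descent/telescoping estimates control $\sum_k L_{k+1}\|x_{k+1}-x_k\|^2$ (weights in the numerator), and converting to weights $1/L_{k+1}$ needs a uniform lower bound on $L_{k+1}$ that the adaptive rule $L_{k+1}=2^{i_k-1}L_k$ does not provide (it only guarantees $L_{k+1}\leq 2L$; the constants may decrease). So the step you yourself flag as ``the delicate part'' is a genuine gap, not bookkeeping.

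The paper avoids this trajectory-wise accounting entirely. By Remark \ref{rem_ordin_model} (inequality \eqref{eq_1242}), a $(\delta,L,q)$-model is a $(\widehat{\delta},\widehat{L})$-model with $\widehat{L}=L+q\rho$ and $\widehat{\delta}=\frac{(2-q)\delta^{2/(2-q)}}{2\rho^{q/(2-q)}}$ for \emph{every} $\rho>0$; the known rate from \cite{Gasnikov2019fast} then gives $f(\widehat{x}_N)-f(x_*)\leq \frac{2\widehat{L}R^2}{N}+2\widehat{\delta}$, and minimizing the right-hand side over $\rho$ (the optimizer $\rho^*=(\sqrt{2}R)^{q-2}N^{(2-q)/2}\delta$ is legitimate because $\rho$ is a pure analysis parameter, free to depend on $N$) produces exactly \eqref{rate_GM}, constants included. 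This a posteriori optimization over $\rho$ is the idea your proposal is missing: you mention \eqref{eq_1242} only in passing to bound $\|x_*-x_k\|$, rather than using it to convert the degree-$q$ error into a constant error once and then tuning $\rho$ against $N$. If you want a self-contained direct proof, the cleanest fix is to apply \eqref{eq_1242} inside your one-step inequality, absorb the resulting $\frac{q\rho}{2}\|x_{k+1}-x_k\|^2$ term consistently (which also requires revisiting which constant appears in the exit condition versus the prox step), sum, and only then optimize $\rho$ — i.e., essentially reproduce the paper's reduction rather than the power-mean argument.
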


\begin{proof}
Since $\psi_{\delta,L, q} (x, y)$ corresponds a $(\widehat{\delta}, \widehat{L})$-model of the function $f$ in a sense of \cite{Gasnikov2019fast} (see Remark \ref{rem_ordin_model}), then  from \cite{Gasnikov2019fast} for Algorithm \ref{alg_AdapGM}, we have
\begin{equation*}
    f(\widehat{x}_N) - f(x_*) \leq \frac{2 \widehat{L} R^2}{N} + 2 \widehat{\delta}, \quad \forall N \geq 1. 
\end{equation*}

By using \eqref{parameters_spec}, we get
\begin{equation}\label{eq_11}
    f(\widehat{x}_N) - f(x_*) \leq \frac{2 ( L + q \rho)}{N} R^2 + \frac{(2-q) \delta^{\frac{2}{2-q}}}{ \rho^{\frac{q}{2-q}}}, \quad \forall \rho >0.
\end{equation}

By minimizing the right hand side of \eqref{eq_11} over $\rho >0$, we find that the optimal value of $\rho$ is $\rho^* = \left(\sqrt{2}R\right)^{q-2} N^{\frac{2-q}{2}} \delta$. Therefore, we have
\begin{align*}
  f(\widehat{x}_N) - f(x_*) & \leq \frac{2LR^2}{N} + \frac{2 \rho^* q R^2}{N} +    \frac{(2-q) \delta^{\frac{2}{2-q}}}{ (\rho^*)^{\frac{q}{2-q}}}
  \\& \leq \frac{2LR^2}{N} + \frac{q \left(\sqrt{2}R\right)^q}{N^{q/2}}\delta + \frac{(2-q) \left(\sqrt{2}R\right)^q }{N^{q/2}}  \delta
  \\& = \frac{2LR^2}{N} + \frac{2 \left(\sqrt{2}R\right)^q }{N^{q/2}}\delta. 
\end{align*}
\end{proof}

\begin{remark}
From \eqref{rate_GM}, we can see that the convergence rate of Algorithm \ref{alg_AdapGM} is of order $O\left( \frac{1}{N} + \frac{\delta}{N^{q/2}} \right)$, and the second term in \eqref{rate_GM} diminishes for any $q>0$, while in \cite{Devolder2013smooth,Gasnikov2019fast,Stonyakin2021OMS} the rate is of order $O\left( \frac{1}{N} + \delta \right)$, and the second term always remains constant equals $\delta$.  
\end{remark}

\section{Adaptive inexact fast gradient method with $(\delta, L, q)$-model}\label{sec:FGM}

In this section, we consider an acceleration version of Algorithm \ref{alg_AdapGM}. Firstly, we consider the case when the objective function is smooth. For this setting of the problem, we propose  Algorithm \ref{alg2_FGM} and prove its convergence rate. Secondly, we use the technique of restarting Algorithm \ref{alg2_FGM}, when the objective function is strongly convex. Finally, under some additional conditions, we show that Algorithm \ref{alg2_FGM} is universal and applicable for solving optimization problems with a weaker level of smoothness of the objective functions. 

\subsection{Smooth convex case}

Let us assume that the objective function $f$ is convex and $L$-smooth. For solving problem \eqref{main_min_prob} with such functions, we propose an acceleration version of Algorithm \ref{alg_AdapGM}. It is listed below as Algorithm \ref{alg2_FGM}.

\begin{algorithm}
\caption{{Adaptive inexact fast  gradient method with $(\delta, L, q)$-model}. }
\label{alg2_FGM}
\textbf{Inputs:} $x_0 \in Q$ is the starting point s.t. $\frac{1}{2}\|x_0 - x_*\|^2 \leq R^2$, $ L_{0} > 0$, $\delta $ is the oracle error, $q \in [0, 2)$ is the degree of the oracle. 
\hspace*{\algorithmicindent}
\begin{algorithmic}[1]
\STATE Set
$y_0 := x_0$, $u_0 := x_0$, $\alpha_0 := 0$, $A_0 := \alpha_0$
\FOR{$k \geq 0$}
\STATE Find the smallest integer $i_k \geq 0$ such that
\begin{align}\label{exitLDL_strong}
    f_{\delta, L, q}(x_{k+1}) & \leq f_{\delta, L, q}(y_{k+1}) + \psi_{\delta, L, q}(x_{k+1}, y_{k+1}) +\frac{L_{k+1}}{2} \left\|x_{k+1} - y_{k+1}\right\|^2 
    \\& \quad + \delta\left \|x_{k+1} - y_{k+1}\right\|^q, \nonumber
\end{align}
where $L_{k+1} = 2^{i_k-1}L_k$. 

Calculate $\alpha_{k+1}$ the largest root  of the equation
\begin{gather*}\label{alpha_def_strong}
    L_{k+1}\alpha^2_{k+1}-  \alpha_{k+1} -A_k= 0,
\end{gather*}
\begin{gather}
    y_{k+1} := \frac{\alpha_{k+1}u_k + A_k x_k}{A_{k+1}}, \label{eqymir2DL_strong}
\end{gather}
\begin{equation}\label{equmir2DL_strong}
    u_{k+1} := \arg\min_{x \in Q}\left\{ \alpha_{k+1}\psi_{\delta,L, q}(x, y_{k+1}) + \frac{1}{2}\left \|x - u_k\right\|^2 \right\} , 
\end{equation}
\begin{gather}\label{eqxmir2DL_strong}
    x_{k+1} := \frac{\alpha_{k+1}u_{k+1} + A_k x_k}{A_{k+1}}. 
\end{gather}
\ENDFOR
\end{algorithmic}
\end{algorithm}

For the convergence rate of Algorithm \ref{alg2_FGM}, we have the following result. 

\begin{theorem}\label{theo_Adaptive_FGM}
Assume that $\psi_{\delta,L,q}(x,y)$ is a $(\delta, L, q)$-model according to Definition \ref{def_neq_inex_model_convex}. After $N \geq 1$ iterations of Algorithm \ref{alg2_FGM}, we have
\begin{equation}\label{rate_FGM}
    f(x_N) - f(x_*) \leq \frac{8L R^2}{(N+1)^2} + \frac{2 \left(2\sqrt{2} R\right)^q}{N^{\frac{3q}{2} - 1}} \delta. 
\end{equation}
\end{theorem}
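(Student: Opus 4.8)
The plan is to reuse, in the accelerated setting, exactly the reduction already carried out in the proof of Theorem~\ref{theo_Adaptive_GM}: convert the $(\delta,L,q)$-model into an ordinary inexact model by absorbing the term $\delta\|x-y\|^q$ into a quadratic term plus a constant, invoke the known convergence estimate for the fast gradient method driven by an inexact $(\widehat\delta,\widehat L)$-model, and then optimize the resulting bound over the auxiliary parameter $\rho>0$.

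First I would apply Remark~\ref{rem_ordin_model}: by \eqref{eq_1242} together with \eqref{spesial_case_model}--\eqref{parameters_spec}, for every $\rho>0$ the model $\psi_{\delta,L,q}(\cdot,\cdot)$ is simultaneously a $(\widehat\delta,\widehat L)$-model of $f$ in the sense of \cite{Gasnikov2019fast}, with $\widehat L=L+q\rho$ and $\widehat\delta=\frac{(2-q)\delta^{2/(2-q)}}{2\rho^{q/(2-q)}}$, and the lower estimate $0\le f(x)-(f_{\delta,L,q}(y)+\psi_{\delta,L,q}(x,y))$ of Definition~\ref{def_neq_inex_model_convex} is preserved. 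Since Algorithm~\ref{alg2_FGM} is precisely the adaptive fast gradient method of \cite{Gasnikov2019fast,Stonyakin2021OMS} run on this model --- the line search \eqref{exitLDL_strong} being the standard backtracking for the local constant $L_{k+1}$ and the subsequent updates the usual coupled accelerated steps --- the convergence guarantee proved there transfers verbatim and, in the notation of \eqref{rate_FGM}, takes the form $f(x_N)-f(x_*)\le\frac{8\widehat L R^2}{(N+1)^2}+2N\widehat\delta$, i.e.
\[
    f(x_N)-f(x_*)\ \le\ \frac{8LR^2}{(N+1)^2}+\frac{8qR^2}{(N+1)^2}\,\rho+N(2-q)\delta^{2/(2-q)}\,\rho^{-q/(2-q)},\qquad N\ge1,
\]
the linear dependence on $N$ of the $\widehat\delta$-part reflecting the familiar error accumulation of accelerated schemes under a constant-accuracy model.

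Next I would minimize the last two summands over $\rho>0$. They have the shape $a\rho+b\rho^{-q/(2-q)}$, whose minimizer is $\rho^*=\bigl(\tfrac{qb}{(2-q)a}\bigr)^{(2-q)/2}$ and whose minimal value equals $\tfrac{2}{q}\,a\rho^*$, exactly as in the proof of Theorem~\ref{theo_Adaptive_GM}. Here $a=\tfrac{8qR^2}{(N+1)^2}$ and $b=N(2-q)\delta^{2/(2-q)}$, so $\rho^*$ is of order $(2\sqrt2 R)^{q-2}N^{3(2-q)/2}\delta$ (the analogue of the value $(\sqrt2 R)^{q-2}N^{(2-q)/2}\delta$ found for the non-accelerated method). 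Tracking the powers of $R$ and $N$ in $a\rho^*=a^{q/2}\bigl(\tfrac{qb}{2-q}\bigr)^{(2-q)/2}$ then yields a factor $R^{q}$ and a net exponent $-q+\tfrac{2-q}{2}=-\bigl(\tfrac{3q}{2}-1\bigr)$ of $N$, after the harmless replacement of $(N+1)^{-q}$ by $N^{-q}$. Collecting the three terms produces $f(x_N)-f(x_*)\le\frac{8LR^2}{(N+1)^2}+\frac{2(2\sqrt2 R)^{q}}{N^{3q/2-1}}\delta$, which is \eqref{rate_FGM}.

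The hardest part is the bookkeeping in the middle step: one has to make sure that the estimate borrowed from \cite{Gasnikov2019fast,Stonyakin2021OMS} for the \emph{adaptive} fast gradient method really is of the stated shape --- an optimization term $O(\widehat L R^2/N^2)$ plus an error term growing only linearly in $N\widehat\delta$, with constants compatible with \eqref{rate_FGM} --- and that the backtracking on $L_{k+1}$ in \eqref{exitLDL_strong}, together with the induced growth of the sequence $A_k$, spoils neither the $1/(N+1)^2$ decay of the optimization term nor the linearity of the accumulated error. It is precisely this linearity which, after the one-variable minimization over $\rho$, converts into the exponent $3q/2-1$ and hence into the absence of error accumulation for $q\ge 2/3$; everything downstream of it is the same calculus already performed for Theorem~\ref{theo_Adaptive_GM}.
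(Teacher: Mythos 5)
Your proposal is correct and follows essentially the same route as the paper: reduce the $(\delta,L,q)$-model to a $(\widehat\delta,\widehat L)$-model via Remark~\ref{rem_ordin_model}, invoke the bound $f(x_N)-f(x_*)\le \frac{8\widehat L R^2}{(N+1)^2}+2N\widehat\delta$ from \cite{Gasnikov2019fast} for the adaptive FGM, and minimize over $\rho$, obtaining the same $\rho^*=\bigl(2\sqrt{2}R\bigr)^{q-2}\bigl(N(N+1)^2\bigr)^{(2-q)/2}\delta$ up to the harmless replacement $(N+1)^2\approx N^2$. Your closed-form shortcut (minimal value $\tfrac{2}{q}a\rho^*$) and exponent bookkeeping reproduce exactly the paper's final estimate \eqref{rate_FGM}.
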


\begin{proof}
Since $\psi_{\delta,L, q} (x, y)$ corresponds a $(\widehat{\delta}, \widehat{L})$-model of the function $f$ in a sense of \cite{Gasnikov2019fast} (see Remark \ref{rem_ordin_model}), then for Algorithm \ref{alg2_FGM}, we have \cite{Gasnikov2019fast}
\begin{equation*}
    f(x_N) - f(x_*) \leq \frac{8 \widehat{L} R^2}{(N+1)^2} + 2 N \widehat{\delta}, \quad \forall N \geq 1. 
\end{equation*} 

By using \eqref{parameters_spec}, for any $\rho > 0$, we have
\begin{align*}
    f(x_N) - f(x_*) 
    & \leq \frac{8 (L + q \rho) R^2}{(N+1)^2} + \frac{(2-q) \delta^{\frac{2}{2-q}}}{ \rho^{\frac{q}{2-q}}} N 
    \\& = \frac{8 L R^2}{(N+1)^2} + \frac{8 q R^2}{(N+1)^2} \rho + (2-q) \delta^{\frac{2}{2-q}} \rho^{\frac{q}{q-2}} N.
\end{align*}

Let $\varphi_1(\rho) : = \frac{8 q R^2}{(N+1)^2} \rho + (2-q) \delta^{\frac{2}{2-q}} \rho^{\frac{q}{q-2}} N$.  By minimizing $\varphi_1(\rho)$ over $\rho >0$, we find that the optimal value of $\rho$ is $\rho^* = \left(2\sqrt{2}R\right)^{q-2} \left(N(N+1)^2\right)^{\frac{2-q}{2}} \delta$. 

Therefore, we have
\begin{align*}
    f(x_N) - f(x_*) & \leq  \frac{8 L R^2}{(N+1)^2} + \frac{8 q R^2}{(N+1)^2} \rho^* + (2-q) \delta^{\frac{2}{2-q}} \left(\rho^*\right)^{\frac{q}{q-2}} N
    \\& = \frac{8 L R^2}{(N+1)^2} + \frac{8 q R^2}{(N+1)^2} \left(2\sqrt{2}R\right)^{q-2} \left(N(N+1)^2\right)^{\frac{2-q}{2}} \delta 
    \\& \quad + (2-q) \delta^{\frac{2}{2-q}} \left(\left(2\sqrt{2}R\right)^{q-2} \left(N(N+1)^2\right)^{\frac{2-q}{2}} \delta\right)^{\frac{q}{q-2}} N
    \\& = \frac{8 L R^2}{(N+1)^2} + \frac{q  \left(2\sqrt{2}R\right)^{q} N}{\left(N(N+1)^2\right)^{q/2}} \delta + \frac{(2-q)\left(2\sqrt{2}R\right)^{q} N}{\left(N(N+1)^2\right)^{q/2}} \delta
    \\& = \frac{8 L R^2}{(N+1)^2} + \frac{2\left(2\sqrt{2}R\right)^{q} N}{\left(N(N+1)^2\right)^{q/2}} \delta
    \\& \leq  \frac{8 L R^2}{(N+1)^2} + \frac{2 \left(2\sqrt{2}R\right)^q N}{N^{3q/2}} \delta
    \\& = \frac{8 L R^2}{(N+1)^2} + \frac{2 \left(2\sqrt{2}R\right)^q }{N^{\frac{3q}{2} - 1 }} \delta. 
\end{align*} 
\end{proof}

\begin{remark}
From \eqref{rate_FGM}, we can see that the convergence rate of Algorithm \ref{alg2_FGM} is of order $O\left( \frac{1}{N^2} + \frac{\delta}{N^{\frac{3q}{2} - 1}} \right)$. Thus for any $q > \frac{2}{3}$, we note that the error does not accumulate. While in \cite{Devolder2013smooth,Gasnikov2019fast,Stonyakin2021OMS} the rate is of order $O\left( \frac{1}{N^2} + N\delta \right)$, and the error accumulated.   
\end{remark}

\subsection{Smooth strongly convex case}\label{subsect:strongly}
Let us assume that the objective function $f$ is $\mu$-strongly convex and $L$-smooth.  

For Algorithm \ref{alg2_FGM}, we have \cite{Gasnikov2019fast}
\begin{equation*}
    f(x_N) - f(x_*) \leq \frac{4 \widehat{L} \left \|x_0 - x_* \right\|^2}{(N+1)^2} + 2 N \widehat{\delta}, \quad \forall N \geq 1. 
\end{equation*} 

By using \eqref{parameters_spec}, for any $\rho > 0$, we have
\begin{align*}
    f(x_N) - f(x_*) & \leq \frac{4 (L + q \rho) \left\|x_0 - x_* \right\|^2}{(N+1)^2} + \frac{(2-q) \delta^{\frac{2}{2-q}}}{ \rho^{\frac{q}{2-q}}} N 
    \\& = \frac{4 L \left \|x_0 - x_* \right \|^2}{(N+1)^2} + \frac{4 q \left\|x_0 - x_* \right\|^2}{(N+1)^2} \rho + (2-q) \delta^{\frac{2}{2-q}} \rho^{\frac{q}{q-2}} N.
\end{align*} 

Now, let us assume that $\left\|x_0 - x_* \right\|_2 \leq r$ for some $r>0$, then for any $\rho > 0$ we get 
\begin{equation*}
    f(x_N) - f(x_*) \leq \frac{4 L \left \|x_0 - x_* \right\|^2}{(N+1)^2} + \underbrace{\frac{4 q r^2}{(N+1)^2} \rho + (2-q) \delta^{\frac{2}{2-q}} \rho^{\frac{q}{q-2}} N}_{: = \varphi_2(\rho)}. 
\end{equation*} 

By minimizing $\varphi_2(\rho)$ over $\rho >0$, we find $\rho^* = (2r)^{q-2} \left(N(N+1)^2\right)^{\frac{2-q}{2}} \delta$. Therefore (in a similar way as in the proof of Theorem \ref{theo_Adaptive_FGM}), we have 
\begin{equation}\label{eq_120bv2gf}
    f(x_N) - f(x_*)  \leq \frac{4 L \left \|x_0 - x_* \right\|^2}{(N+1)^2} + \frac{2 (2r)^q}{N^{\frac{3q}{2} - 1}} \delta \leq \frac{4 L \left\|x_0 - x_* \right\|^2}{N^2} + \frac{2^{q+1} r^q}{N^{\frac{3q}{2} - 1}} \delta.
\end{equation}
     
For the problems with strongly convex functions (i.e.,  the objective function $f$ is $\mu$-strongly convex), we use the technique of restarting Algorithm \ref{alg2_FGM}, to accelerate its convergence rate in the following way. 

Since $f$ is $\mu$-strongly convex, we have 
\[
    f(x_N) - f(x_*) \geq \frac{\mu}{2} \left\|x_N - x_*\right\|^2, \quad \forall \mu > 0. 
\]

Thus, from \eqref{eq_120bv2gf}, we get 
\begin{equation}\label{star1}
    \left\|x_N - x_*\right\|^2 \leq \frac{8 L \left\|x_0 - x_*\right\|^2}{\mu N^2} + \frac{2^{q+2} r^q}{\mu N^{\frac{3q}{2} - 1}} \delta. 
\end{equation}

Let us take $\frac{8L}{\mu N^2} \leq \frac{1}{2}$, then $N \geq 4 \sqrt{\frac{L}{\mu}}$. Thus, after setting $N = \left\lceil 4 \sqrt{\frac{L}{\mu}} \right\rceil$, from \eqref{star1}, we find 
\begin{equation}\label{star2}
    \left\|x_N - x_*\right\|^2 \leq \frac{\left\|x_0 - x_*\right\|^2}{2} + \frac{2^{q+2} r^q}{\mu} \left\lceil\left( 4 \sqrt{\frac{L}{\mu}}\right)^{1- \frac{3q}{2}} \right\rceil \delta. 
\end{equation}

Now, let us restart Algorithm \ref{alg2_FGM} with $x_N := x^{(1)}$ as an initial point and with $N$ iterations, then we get a point $x^{(2)}$ (as an output point of the restart) for which we have 
\begin{align*}
   \left\|x^{(2)} - x_*\right\|^2 & \; \, \leq \frac{\left\|x^{(1)} - x_*\right\|^2}{2} + \frac{2^{q+2} r^q}{\mu} \left\lceil\left( 4 \sqrt{\frac{L}{\mu}}\right)^{1- \frac{3q}{2}} \right\rceil \delta
   \\& \stackrel{\eqref{star2}}{\leq} \frac{\left\|x_0 - x_*\right\|^2}{2^2} + \frac{2^{q+2} r^q}{\mu} \left\lceil\left( 4 \sqrt{\frac{L}{\mu}}\right)^{1- \frac{3q}{2}} \right\rceil \left( 1 + \frac{1}{2}\right) \delta. 
\end{align*}

Therefore, after $p - 1$ restarts of Algorithm \ref{alg2_FGM}, we get a point $x^{(p)}$ such that  
\begin{align*}
   \left\|x^{(p)} - x_*\right\|^2 & \leq \frac{\left\|x_0 - x_*\right\|^2}{2^p} + \frac{2^{q+2} r^q}{\mu} \left\lceil\left( 4 \sqrt{\frac{L}{\mu}}\right)^{1- \frac{3q}{2}} \right\rceil \left( 1 + \frac{1}{2} + \ldots + \frac{1}{2^{p-1}}\right) \delta
   \\& < \frac{\left\|x_0 - x_*\right\|^2}{2^p} + \frac{2^{q+3} r^q}{\mu} \left\lceil\left( 4 \sqrt{\frac{L}{\mu}}\right)^{1- \frac{3q}{2}} \right\rceil \delta. 
\end{align*}

Now, let us take $p \geq \log_2 \left(\frac{\left\|x_0 - x_*\right\|^2}{\varepsilon}\right) + 1 $, which implies $\frac{\left\|x_0 - x_*\right\|^2}{2^p} \leq \frac{\varepsilon}{2}$.

Also, let us choose $\delta$, such that 
\begin{equation}\label{star_3}
    \frac{2^{q+3} r^q}{\mu} \left\lceil\left( 4 \sqrt{\frac{L}{\mu}}\right)^{1- \frac{3q}{2}} \right\rceil \delta = \frac{\varepsilon}{2}. 
\end{equation}

This gives 
\begin{equation}\label{eq_I}
    \delta = \frac{\mu \varepsilon}{2^{q+4} r^q} \left\lceil\left( 4 \sqrt{\frac{L}{\mu}}\right)^{ \frac{3q}{2} - 1 } \right\rceil.  
\end{equation}

Therefore, after $p = \left \lceil \log_2 \left(\frac{\left\|x_0 - x_*\right\|^2}{\varepsilon}\right) + 1 \right\rceil $ restarts of Algorithm \ref{alg2_FGM}, with $N =  \left \lceil4 \sqrt{\frac{L}{\mu}} \right\rceil$ iterations in each restart, we get $\|x^{(p)} - x_*\|_2^2 \leq \varepsilon$, and the total number of oracle calls is 
\[
    O\left( \sqrt{\frac{L}{\mu}} \log_2 \left(\frac{\left\|x_0 - x_*\right\|^2}{\varepsilon}\right) \right) . 
\]

\begin{remark}
From \eqref{star_3}, we can find the desired accuracy $\varepsilon$ as a function of $q \in [0, 2)$, as follows
\begin{equation}\label{eq1204lkj}
    \varepsilon(q) = \frac{2^{q+4} r^q}{\mu} \left\lceil\left( 4 \sqrt{\frac{L}{\mu}}\right)^{1- \frac{3q}{2}} \right\rceil \delta.
\end{equation}

From \eqref{eq1204lkj}, we find that $\varepsilon(q) < \varepsilon(0), \forall q > \frac{2}{3}$, and thus we get a solution to the minimization problem with higher accuracy (and with a better interval for the $\delta$ in \eqref{eq_I} since $\frac{3q}{2} - 1 > 0, \forall q > \frac{2}{3}$) better than the accuracy concluded by algorithms with the inexact model of a function which considered in \cite{Gasnikov2019fast,Stonyakin2021OMS}, that is with a $(\delta, L, 0)$-model. This shows the feature of the proposed model of degree $q$ when we use this model for higher degree $q > 2/3$. 
\end{remark}

\subsection{Universal Fast Gradient Method}\label{subsect:Universal_mothod}

Let us assume that the error $\delta$, in Algorithm \ref{alg2_FGM}, can depend on the iteration counter $k$, which is indicated by input sequence $\{\delta_k\}_{k \geq 0}$. For instance, this allows obtaining the Universal Fast Gradient Method (UFGM) in which different values of  $\{\delta_k\}_{k \geq 0}$ are required (see \cite{Baimurzina2019Universal,Nesterov2015Universal}) in each iteration. 

Let us assume that the convex set $Q$ is bounded, i.e., there is $R>0$ such that for any $x,y \in Q$, we have 
\begin{equation}
    \frac{1}{2 }\|x - y\|^2 \leq R^2 \Longrightarrow \|x - y\|^q \leq \left(\sqrt{2} R\right)^q, \quad \forall q \in [0, 2). 
\end{equation}

Let us, also, assume that the function $f$ has H\"{o}lder-continuous subgradients, i.e., it holds the following inequality 
\begin{equation}\label{holder_condd}
    \left\|\nabla f(x) - \nabla f(y)\right\| \leq L_{\nu} \|x - y\|^{\nu}, \quad \forall x, y \in Q,
\end{equation}
where $\nu \in [0, 1]$ and $0 < L_{\nu} <  \infty$. 

From this, we can get the following inequality
\begin{equation*}
    f(x) - \left( f(y) + \langle \nabla f(y), x - y \rangle \right) \leq  \frac{L(\delta)}{2}\|x-y\|^2+\delta\|x-y\|^q, \quad \forall \delta > 0,  
\end{equation*}
where 
\[
    L(\delta)=\frac{1+\nu-q}{2-q}\left(\frac{L_\nu}{1+\nu}\right)^{\frac{2-q}{1+\nu-q}}\left(\frac{1-\nu}{\delta(2-q)}\right)^{\frac{1-\nu}{1+\nu-q}},
\]
for any $q \in [0,1+ \nu)$ (see Example \ref{ex4_inexact}).  

Using the same arguments as in Lemma 4 and Theorem 2 in \cite{Gasnikov2019fast}, we get the following result
\begin{equation}\label{refdf5215}
    f(x_N) - f(x_*) \leq \frac{R^2}{A_N} + \frac{2\left(\sqrt{2} R\right)^q \sum_{k = 0}^{N-1} \delta_k A_{k+1}}{A_N}. 
\end{equation}

Now, let us set $\delta_k = \frac{\alpha_{k+1} \varepsilon}{4\left(\sqrt{2} R\right)^q  A_{k+1} } $, where $\varepsilon$ is the desired accuracy of a solution. Then \eqref{refdf5215}, become in the following form
\begin{equation}\label{refdf1201}
    f(x_N) - f(x_*) \leq \frac{R^2}{A_N} + \frac{\varepsilon}{2}. 
\end{equation}

Using the same arguments as in Theorem 3 of \cite{Nesterov2015Universal}, we obtain that
$$
    A_N \geq \frac{N^{\frac{1+3 \nu}{1+\nu}} \varepsilon^{\frac{1-\nu}{1+\nu}}}{2^{\frac{2+4 \nu}{1+\nu}} L_{\nu}^{\frac{2}{1+\nu}}}.
$$
Hence, we conclude that
\begin{equation}\label{bound_Universal}
    N \leq \inf_{\nu \in[0,1]}\left[2^{\frac{3+5 \nu}{1+3 \nu}}\left(\frac{L_{\nu} R^{1+\nu}}{\varepsilon}\right)^{\frac{2}{1+3 \nu}}\right], 
\end{equation}
where the infimum can be taken since neither $\nu$ nor $L_{\nu}$ is not used in the algorithm.

The bound \eqref{bound_Universal} is optimal up to a numerical factor \cite{Guzman2015lower}.

\section{Inexact Gradient method with $(\delta, L, \mu, q)$-oracle}\label{sec:higher_deg_oracle}

\begin{definition}\label{def:inex_model_stronglyconvex}
Let $f: Q \longrightarrow \mathbb{R}$ be a convex function, $\delta \geq 0, L>, \mu > 0$ and $q \in [0,2)$. The pair $(f_{\delta, L, \mu, q} (y), \psi_{\delta, L , \mu,   q} (x, y))$ is called a $(\delta, L, \mu, q)$-model of degree $q$ of the function $f(x)$ at the given point $y \in Q$, if it holds the following inequalities
\begin{equation}\label{ineq_inex_model_stronglyconvex}
    \frac{\mu}{2} \|x - y\|^2 \leq f(x) - \left( f_{\delta, L, \mu, q} (y) + \psi_{\delta, L, \mu, q} (x, y)\right) \leq \frac{L}{2} \|x - y\|^2 + \delta \|x - y\|^q, \quad \forall x \in Q,
\end{equation}
and $\psi_{\delta, L, \mu,  q} (x, y)$  is convex in $x$, satisfies $\psi_{\delta, L , \mu,  q}(x,x) = 0$  for all $x \in Q$. 
\end{definition}

Let us set  $\psi_{\delta, L, \mu,  q} (x, y) = \left\langle g_{\delta, L, \mu,  q}(y) , x - y\right\rangle$, where $g_{\delta, L, \mu,  q}(y) \in \textbf{E}^*$. Then we can formulate the following definition.

\begin{definition}\label{def_q_oracle_strongly}
Let $f: Q \longrightarrow \mathbb{R}$ be a convex function on a convex set $Q$. We say that $f$ is equipped with a $(\delta, L, \mu, q)$-oracle of degree $q \in [0, 2)$, with $\delta \geq 0, L>0,$ and $ \mu > 0$,  if for any $y \in Q$ we can compute a pair $\left(f_{\delta, L, \mu, q}(y), g_{\delta, L, \mu, q}(y)\right) \in \mathbb{R} \times \mathbb{R}^n$, such that 
\begin{equation}\label{def_q_oracle_srongly}
    \frac{\mu}{2} \|x - y\|^2 \leq f(x) - \left(f_{\delta, L, \mu, q}(y) + \left \langle g_{\delta, L, \mu, q} (y), x - y  \right \rangle  \right) \leq \frac{L}{2} \|x - y\|^2 + \delta \|x - y\|^q.
\end{equation}
\end{definition}

\begin{remark}
From \eqref{eq_1242} and \eqref{def_q_oracle_srongly}, we get
\begin{equation*}
   \frac{\mu}{2} \|x - y\|^2  \leq f(x) - \left(f_{\delta, L, \mu, q}(y) + \left \langle g_{\delta, L, \mu, q} (y), x - y  \right \rangle  \right) \leq \frac{\widehat{L}}{2} \|x - y\|^2 + \widehat{\delta},
\end{equation*}
where $\widehat{L} = L + q \rho$ and $\widehat{\delta} = \frac{(2-q) \delta^{\frac{2}{2-q}}}{2 \rho^{\frac{q}{2-q}}}$. 

Thus, $(\delta, L, \mu, q)$-oracle corresponds a $(\widehat{\delta}, \widehat{L}, \mu)$-oracle in the sense of \cite{Devolder2013strongly}.

Also, note that the $(\delta, L, \mu)$-oracle which considered in \cite{Devolder2013strongly} is a $(\delta, L, \mu, 0)$-oracle in the sense of the Definition \ref{def_q_oracle_strongly}.
\end{remark}

For problem \eqref{main_min_prob}, with $(\delta, L,\mu, q)$-oracle of degree $q \in [0,2)$ of convex function $f$, we consider an inexact gradient-type method, listed as Algorithm \ref{alg_GM_strongly_convex}. 

\begin{algorithm}[!ht]
\caption{Inexact gradient method with $(\delta, L, \mu, q)$-oracle.}\label{alg_GM_strongly_convex}
\textbf{Inputs:} initial point $x_0  \in Q$, $L>0$. 
\hspace*{\algorithmicindent}
\begin{algorithmic}[1]
\FOR{$k= 0, 1, \ldots $}
\STATE Obtain $\left(f_{\delta, L,\mu, q}(x_k), g_{\delta, L, \mu, q}(x_k)\right)$.
\STATE Calculate 
    \begin{equation}\label{prob_alg_3}
        x_{k+1} = \arg\min_{x \in Q } \left\{ \left\langle g_{\delta, L, \mu, q} (x_k), x- x_k \right\rangle + \frac{L}{2} \left \|x - x_k\right\|^2 \right\}.
    \end{equation}
\ENDFOR
\end{algorithmic}
\end{algorithm}

\begin{theorem}
Assume that $f$ is endowed with a $(\delta, L, \mu, q)$-oracle with $\delta \geq 0, L>0, \mu >0$ and $q \in [0, 2)$. Then for the sequence $\widehat{x}_k = \arg\min_{0 \leq i \leq k-1} f(x_i)$, generated by Algorithm \ref{alg_GM_strongly_convex}, it holds the following inequality
\begin{equation}\label{rate_strongly_lin}
    f\left(\widehat{x}_k\right) - f(x_*) \leq \frac{L r_0^2}{2} \exp \left(- k \frac{\mu}{L}\right) + \delta \sum_{ i = 0}^{k-1} \left[\left(1 - \frac{\mu}{L}\right)^i \left \|x_{k - i} - x_{k - 1 - i}\right\|^q\right],
\end{equation}
where $r_0 = \left\|x_0 - x_*\right\|$. 
\end{theorem}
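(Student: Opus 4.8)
The plan is a one-step descent estimate followed by unrolling a scalar recursion on the squared distances $r_j^2 := \|x_j - x_*\|^2$. Throughout write $f_k := f_{\delta, L, \mu, q}(x_k)$ and $g_k := g_{\delta, L, \mu, q}(x_k)$. First I would fix $k \geq 0$ and prove the per-iteration inequality
\[
    f(x_{k+1}) - f(x_*) + \frac{L}{2}\, r_{k+1}^2 \;\leq\; \frac{L-\mu}{2}\, r_k^2 + \delta\, \|x_{k+1}-x_k\|^q .
\]
This is obtained by chaining three facts: (i) the right-hand inequality of \eqref{def_q_oracle_srongly} with $y = x_k$, $x = x_{k+1}$, which bounds $f(x_{k+1})$ above by $f_k + \langle g_k, x_{k+1}-x_k\rangle + \frac{L}{2}\|x_{k+1}-x_k\|^2 + \delta\|x_{k+1}-x_k\|^q$; (ii) the fact that $x_{k+1}$ is the exact minimizer over $Q$ of the $L$-strongly convex function $x \mapsto \langle g_k, x - x_k\rangle + \frac{L}{2}\|x - x_k\|^2$ in \eqref{prob_alg_3}, so that first-order optimality together with $L$-strong convexity yields
\[
    \langle g_k, x_{k+1}-x_k\rangle + \frac{L}{2}\|x_{k+1}-x_k\|^2 + \frac{L}{2}\|x_*-x_{k+1}\|^2 \;\leq\; \langle g_k, x_*-x_k\rangle + \frac{L}{2}\|x_*-x_k\|^2 ;
\]
and (iii) the left-hand inequality of \eqref{def_q_oracle_srongly} with $y = x_k$, $x = x_*$, which gives $f_k + \langle g_k, x_*-x_k\rangle \leq f(x_*) - \frac{\mu}{2}\, r_k^2$. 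Substituting (ii) and (iii) into (i) and collecting terms produces the displayed inequality.

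Next, since $x_*$ minimizes $f$ over $Q$ the gap $f(x_{k+1}) - f(x_*)$ is nonnegative, so discarding it turns the per-iteration inequality into the recursion
\[
    r_{k+1}^2 \;\leq\; \Big(1 - \tfrac{\mu}{L}\Big)\, r_k^2 + \frac{2\delta}{L}\,\|x_{k+1}-x_k\|^q ,
\]
and unrolling from index $0$ to $k-1$ gives
\[
    r_k^2 \;\leq\; \Big(1 - \tfrac{\mu}{L}\Big)^{k} r_0^2 + \frac{2\delta}{L}\sum_{i=0}^{k-1}\Big(1 - \tfrac{\mu}{L}\Big)^{i}\,\|x_{k-i}-x_{k-1-i}\|^q .
\]

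Finally I would return to the per-iteration inequality, drop the nonnegative term $\frac{L}{2}\, r_{k+1}^2$ and shift the index to obtain $f(x_k) - f(x_*) \leq \frac{L-\mu}{2}\, r_{k-1}^2 + \delta\,\|x_k - x_{k-1}\|^q$, and then substitute the unrolled bound for $r_{k-1}^2$. Writing $\frac{L-\mu}{2} = \frac{L}{2}\big(1 - \tfrac{\mu}{L}\big)$ lets the extra factor absorb into the powers, so the two geometric sums merge into $\delta\sum_{i=0}^{k-1}(1-\mu/L)^i\|x_{k-i}-x_{k-1-i}\|^q$, and the estimate $(1-\mu/L)^{k} \leq \exp(-k\mu/L)$ then yields the claimed bound for $f(x_k)-f(x_*)$; since $\widehat{x}_k$ is the best among the computed iterates, $f(\widehat{x}_k)-f(x_*)$ is bounded by the same expression, which is \eqref{rate_strongly_lin}. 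I do not expect a genuinely hard step here: the two points that require attention are invoking the $L$-strong convexity of the subproblem in \eqref{prob_alg_3} to create the term $\frac{L}{2}\|x_*-x_{k+1}\|^2$ needed for telescoping, and the index bookkeeping when the unrolled distance recursion is reinserted and the two geometric sums are combined into one.
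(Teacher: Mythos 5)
Your proposal is correct, and after the one-step estimate it departs from the paper in a meaningful way. Your per-iteration inequality is essentially the same inequality the paper derives (the paper expands $r_{k+1}^2$ and uses the variational inequality for \eqref{prob_alg_3}, which is algebraically equivalent to your use of the $L$-strong convexity of the subproblem; multiplying your inequality by $\tfrac{2}{L}$ gives exactly the paper's recursion $r_{k+1}^2 \le (1-\tfrac{\mu}{L})r_k^2 + \tfrac{2}{L}(f(x_*)-f(x_{k+1})) + \tfrac{2\delta}{L}\|x_{k+1}-x_k\|^q$). The difference is downstream. The paper keeps the function gaps inside the recursion, unrolls, and then extracts a \emph{best-iterate} bound by dividing by the geometric sum $\sum_i (1-\mu/L)^i$, which forces two extra estimates (bounding $\mu/\bigl(2\bigl[1-(1-\mu/L)^{k+1}\bigr]\bigr)$ by $L/2$, and bounding the weighted average of the $\delta$-terms by the full weighted sum). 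You instead discard the nonnegative gap to obtain a pure distance recursion, unroll it, and reinsert it into the one-step inequality; the bookkeeping with $\tfrac{L-\mu}{2}=\tfrac{L}{2}(1-\tfrac{\mu}{L})$ indeed merges the sums as you claim, and the result is a \emph{last-iterate} bound on $f(x_k)-f(x_*)$ with exactly the stated right-hand side, which is a slightly stronger conclusion and a cleaner extraction step. One small caveat: passing from your bound on $f(x_k)$ to $f(\widehat{x}_k)$ requires $x_k$ to be among the candidates in the definition of $\widehat{x}_k$, whereas the statement takes the argmin over $0\le i\le k-1$; note, however, that the paper's own proof has the same off-by-one (its weighted sum runs over $f(x_1),\dots,f(x_{k+1})$ while $\widehat{x}_{k+1}$ is defined over $x_0,\dots,x_k$), so this is a defect of the statement's indexing rather than of your argument.
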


\begin{proof}
Let us denote $r_k = \left\|x_k - x_*\right\|, f_k = f_{\delta, L, \mu, q} (x_k)$ and $g_k = g_{\delta, L, \mu, q} (x_k)$ for any $k \geq 0$. Then we have 
\begin{align*}
    r_{k+1}^2 & = \left \|x_{k+1} - x_k + x_k- x_*\right \|^2
    \\& = \left\|x_{k+1} - x_k\right\|^2 + 2 \left\langle  x_{k+1} - x_k, x_k - x_{k+1} + x_{k+1} - x_*\right\rangle + \left \|x_k- x_*\right\|^2
    \\& = \left\|x_{k+1} - x_k\right\|^2 - 2 \|x_{k+1} - x_k\|^2 + 2 \left\langle  x_{k+1} - x_k, x_{k+1}  - x_*\right\rangle +\left \|x_{k} - x_*\right\|^2 
    \\& = r_k^2 + 2 \left\langle  x_{k+1} - x_k, x_{k+1}  - x_*\right\rangle - \left\|x_{k+1} - x_k\right\|^2.
\end{align*}

By using the optimality condition of the problem \eqref{prob_alg_3},
\[
    \left\langle g_k + L(x_{k+1} - x_k) ,  x - x_{k+1}\right\rangle \geq 0, \quad \forall x \in Q,
\]
we find 
\[
    \left\langle x_{k+1} - x_k , x_{k+1}  - x_* \right\rangle  \leq \frac{1}{L} \left\langle g_k,  x_* - x_{k+1}\right\rangle . 
\]

Thus, we get 
\begin{align*}
    r_{k+1}^2 & \; \, \leq r_k^2 + \frac{2}{L} \left\langle g_k,  x_* - x_{k+1}\right\rangle  - \left\|x_{k+1} - x_k\right\|^2
    \\&\; \, =  r_k^2 + \frac{2}{L} \left\langle g_k,  x_* -  x_k + x_k - x_{k+1}\right\rangle  - \left\|x_{k+1} - x_k\right\|^2
    \\&\; \, = r_k^2 +  \frac{2}{L} \left\langle g_k,  x_* -  x_k \right\rangle -  \frac{2}{L} \left( \left\langle g_k,  x_{k+1} -  x_k \right\rangle  + \frac{L}{2}\left\|x_{k+1} - x_k\right\|^2  \right)
    \\& \stackrel{\eqref{def_q_oracle_srongly}}{\leq} r_k^2 +  \frac{2}{L} \left(f(x_*) - f_k - \frac{\mu}{2} \left\|x_* - x_k\right\|^2\right) 
    \\& \quad - \frac{2}{L} \left(f\left(x_{k+1}\right) - f_k - \delta \left\|x_{k+1} - x_k \right \|^q\right)
    \\&\; \, =  \left( 1 - \frac{\mu}{L}\right) r_k^2 + \frac{2}{L} \left(f(x_*) - f\left(x_{k+1}\right) \right) + \frac{2\delta }{L} \left  \|x_{k+1} - x_k\right \|^q. 
\end{align*}

By a recursive application of the last inequality, we get the following
\begin{align*}
    0 & \leq r_{k+1}^2 \\&
    \leq \left( 1 - \frac{\mu}{L}\right) \left[  \left( 1 - \frac{\mu}{L}\right) r_{k-1}^2 + \frac{2}{L} \left(f(x_*) - f\left(x_{k}\right) \right) + \frac{2\delta }{L}   \left\|x_{k} - x_{k-1} \right\|^q \right] 
    \\& \quad +\frac{2}{L} \left(f(x_*) - f\left(x_{k+1}\right) \right)  + \frac{2\delta }{L}   \left\|x_{k+1} - x_k \right\|^q 
    \\& = \left( 1 - \frac{\mu}{L}\right)^2 r_{k-1}^2 + \frac{2}{L} \left[ \left(1- \frac{\mu}{L}\right) \left(f(x_*) - f\left(x_{k}\right) \right) + f(x_*) - f\left(x_{k+1}\right)   \right] 
    \\& \quad + \frac{2\delta}{L} \left[\left(1- \frac{\mu}{L}\right) \left\|x_{k} - x_{k-1}\right \|^q + \left\|x_{k+1} - x_k\right \|^q  \right]
     \leq \ldots
     \\& \leq  \left( 1 - \frac{\mu}{L}\right)^{k+1} r_{0}^2 + \frac{2}{L} \sum_{i = 0}^{k}  \left[\left(1- \frac{\mu}{L}\right)^i \left(f(x_*) - f\left(x_{k+1-i}\right) \right) \right]
     \\& \quad + \frac{2\delta}{L} \sum_{i = 0}^{k} \left(1- \frac{\mu}{L}\right)^i\left \|x_{k+1-i} - x_{k-i}\right \|^q .
\end{align*}

Therefore, we have
\begin{align*}
    & \quad \; \frac{2}{L} \sum_{i = 0}^{k} \left[ \left(1- \frac{\mu}{L}\right)^i\left( f\left(x_{k+1-i}\right) - f(x_*) \right)   \right] 
    \\& \leq \left( 1 - \frac{\mu}{L}\right)^{k+1} r_{0}^2+  \frac{2\delta}{L} \sum_{i = 0}^{k} \left(1- \frac{\mu}{L}\right)^i \left\|x_{k+1-i} - x_{k-i}\right \|^q .  
\end{align*}

Now, from the definition $\widehat{x}_{k+1} := \arg\min_{0 \leq i \leq k} f(x_i)$, and since
$$
\frac{2}{L} \sum_{i = 0}^{k} \left(1- \frac{\mu}{L}\right)^i = \frac{2}{\mu} \left[ 1 -\left(1- \frac{\mu}{L}\right)^{k+1} \right],
$$
we get the following 
\begin{align*}
    & \quad \; \left(\frac{2}{L} \sum_{i = 0}^{k} \left(1- \frac{\mu}{L}\right)^i\right) \left(f\left(\widehat{x}_{k+1}\right) - f(x_*)\right) 
    \\& \leq  \left( 1 - \frac{\mu}{L}\right)^{k+1} r_{0}^2 +  \frac{2\delta}{L} \sum_{i = 0}^{k} \left(1- \frac{\mu}{L}\right)^i \left\|x_{k+1-i} - x_{k-i}\right \|^q .
\end{align*}

Thus, we have 
\begin{align}
    f\left(\widehat{x}_{k+1}\right) & - f(x_*) \nonumber
    \leq   \frac{\left( 1 - \frac{\mu}{L}\right)^{k+1} r_{0}^2 }{\frac{2}{\mu} \left[ 1 -\left(1- \frac{\mu}{L}\right)^{k+1} \right]}
    +  \delta \frac{ \sum_{i = 0}^{k} \left(1- \frac{\mu}{L}\right)^i \left\|x_{k+1-i} - x_{k-i}\right \|^q }{ \sum_{i = 0}^{k} \left(1- \frac{\mu}{L}\right)^i} \nonumber
    \\& \leq \frac{L}{2}  \left( 1 - \frac{\mu}{L}\right)^{k+1} r_{0}^2 
    +  \delta \frac{ \sum_{i = 0}^{k} \left(1- \frac{\mu}{L}\right)^i \left\|x_{k+1-i} - x_{k-i}\right \|^q }{ \sum_{i = 0}^{k} \left(1- \frac{\mu}{L}\right)^i} \nonumber
    \\& \leq \frac{Lr_0^2}{2} \exp\left(- (k+1) \frac{\mu}{L}\right) +  \delta \frac{ \sum_{i = 0}^{k} \left(1- \frac{\mu}{L}\right)^i \left\|x_{k+1-i} - x_{k-i}\right \|^q }{ \sum_{i = 0}^{k} \left(1- \frac{\mu}{L}\right)^i}
    \label{eq_erty}
    \\& \leq \frac{L r_0^2}{2} \exp \left(- (k+1) \frac{\mu}{L}\right) + \delta \sum_{ i = 0}^{k} \left[\left(1 - \frac{\mu}{L}\right)^i \left \|x_{k+1 - i} - x_{k - i}\right\|^q\right]. \nonumber
\end{align}
\end{proof}

\begin{remark}
The case when the objective function is smooth and strongly convex was studied in Section \ref{sec:FGM} (see Subsect. \ref{subsect:strongly}), where we used the technique of restarting Algorithm \ref{alg2_FGM}. For this technique, we consider the Euclidean setting of the considered problem and in addition to running the algorithm, we must know the function's strongly convex parameter $\mu$.  Whilst in Algorithm \ref{alg_GM_strongly_convex}, there are no restrictions on using of any setting (i.e., any norm) of the problem. Also, there isn't a necessity to know the parameter $\mu$, in addition to the ease of implementing the Algorithm \ref{alg_GM_strongly_convex}. 
\end{remark}

\section{Inexact $(\delta, L, q)$-model for variational inequalities}\label{Sect:VI}

In this section, we consider the problem of finding a solution $x_* \in Q$ for variational inequality (VI) in the following abstract form \cite{Stonyakin2021OMS}
\begin{equation}\label{VI_prob_1}
    \psi(x, x_*) \geq 0, \quad \forall x \in Q,
\end{equation}
for some convex compact set $Q \subset \mathbb{R}^n$ and some function $\psi: Q \times Q \longrightarrow \mathbb{R}$. 

By assuming the abstract monotonicity of the function $\psi$
\begin{equation}\label{abstract_monot_VI}
    \psi(x,y) + \psi(y, x) \leq 0, \quad \forall x, y \in Q,
\end{equation}
we find that any solution to \eqref{abstract_monot_VI} is a solution to the following inequality
\begin{equation}\label{VI_max}
    \max_{x \in Q} \psi (x_*, x) \leq 0. 
\end{equation}

In the general case, we assume the existence of a solution $x_*$ of problem \eqref{VI_prob_1}. In a particular case, if for some operator $g: Q \longrightarrow \mathbb{R}^n$ we set $\psi(x, y) = \langle g(y), x - y \rangle \; \forall x, y \in Q$, then \eqref{VI_prob_1} and \eqref{VI_max} are equivalent to a standard strong and weak VI with the operator $g$, respectively \cite{Stonyakin2021OMS}.

\begin{definition}\label{Def_Model_VI}
Let $\delta>0, L > 0$ and $q \in [0, 2)$. We say that a function $\psi$ has a $(\delta, L, q)$-model $\psi_{\delta, L,q} (x, y)$ of degree $q$ for variational inequalities if the following properties hold for each $x, y, z \in Q$:
\begin{enumerate}
    \item[(i)] $\psi(x, y) \leq \psi_{\delta, L, q}(x, y) + \delta \|x-y\|^q$,
    \item[(ii)] $\psi_{\delta, L, q} (x, y)$ convex in the first variable,
    \item[(iii)] $\psi_{\delta, L, q}(x,x)=0$,
    \item[(iv)] ({\it abstract $(\delta, q)$-monotonicity})
    \begin{equation}\label{eq:abstr_monot_delta}
        \psi_{\delta, L, q}(x,y)+\psi_{\delta, L, q}(y,x)\leq \delta \|x- y \|^q,
    \end{equation}
    \item[(v)] 
    \begin{align}\label{VIeq20}
        \psi_{\delta, L, q}(x,y)  & \leq \psi_{\delta, L, q}(x,z)+\psi_{\delta, L, q}(z,y)+ \frac{L}{2}\left(\|z - x\|^2 + \|z- y \|^2 \right) 
        \\& \quad + \frac{\delta}{2} \left(\|z - x\|^q + \|z- y \|^q\right). \nonumber
    \end{align}
\end{enumerate}
\end{definition}

\begin{remark}
From \eqref{eq_1242} and \eqref{VIeq20}, we get the following inequality
\[
    \psi_{\delta, L, q}(x,y)   \leq \psi_{\delta, L, q}(x,z)+\psi_{\delta, L, q}(z,y)+ \frac{\widehat{L}}{2}\left(\|z - x\|^2 + \|z- y \|^2 \right) + \widehat{\delta},
\]
where  $\widehat{L} := L + q \rho$ and $\widehat{\delta} := \frac{(2-q) \delta^{\frac{2}{2-q}}}{2\rho^{\frac{q}{2-q}}}$. 

Thus, $\psi_{\delta,L, q} (x, y)$ corresponds a $(\widehat{\delta}, \widehat{L})$-model for VI in a sense of \cite{Stonyakin2021OMS}. 

Note that the inexact model for variational inequalities which was considered in \cite{Stonyakin2021OMS} is a $(\delta, L, 0)$-model in a sense of the Definition \ref{Def_Model_VI}.
\end{remark}

For the abstract variational inequalities problem \eqref{VI_prob_1} (or \eqref{VI_max}),  with $(\delta, L, q)$-model, we consider an adaptive Algorithm, listed as Algorithm \ref{Alg:UMPModel}. 

\begin{algorithm}[htp]
\caption{Generalized Mirror Prox for VIs with $(\delta, L, q)$-model.}
\label{Alg:UMPModel}
\textbf{Inputs:} accuracy $\varepsilon > 0$, oracle error $\delta >0$, $q \in [0, 2)$, initial guess $L_{0} >0$. 
\begin{algorithmic}[1]
\STATE Set $k=0$, $z_0 \in Q$ such that $\max_{x \in Q} \left\|x - z_0\right\|^2 \leq D$, for some $D>0$.
\STATE \textbf{repeat}
\STATE \quad Find the smallest integer $i_k \geq 0$ such that
\begin{equation}\label{eqUMP23}
\begin{aligned}
    \psi_{\delta, L, q}(z_{k+1}, z_{k}) & \leq \psi_{\delta, L, q}(z_{k+1}, w_{k})+\psi_{\delta, L, q}(w_k,z_k) 
    \\& \quad  + \frac{L_{k+1}}{2} \left( \left\|w_k - z_k\right\|^2 + \left\|w_k - z_{k+1}\right\|^2 \right) 
    \\& \quad  +\frac{ \delta}{2} \left(\left\|w_k - z_k\right\|^q + \left\|w_k - z_{k+1}\right\|^q\right),
\end{aligned}
\end{equation}
\quad where 	$L_{k+1}=2^{i_k-1}L_{k}$, and
\begin{align}
    w_k&={\mathop {\arg\min }\limits_{x\in Q}} \left\{\psi_{\delta, L, q}(x, z_k)+ \frac{L_{k+1} }{2}\left\|x -  z_k\right\|^2  \right\}, \label{eq:UMPwStepMod} \\
    z_{k+1}&={\mathop {\arg\min }\limits_{x\in Q}} \left\{\psi_{\delta, L, q}(x, w_k) + \frac{L_{k+1} }{2} \left\|x -  z_k\right\|^2 \right\}, \label{eq:UMPzStepMod}
\end{align}
\STATE \textbf{until}
\begin{equation}\label{eq_Alg3}
    S_N:= \sum_{k=0}^{N-1}\frac{1}{L_{k+1}}\geq \frac{\max_{x \in Q} \left\|x -  z_0\right\|^2}{2 \varepsilon}.
\end{equation}
\end{algorithmic}
\textbf{Output:} $\widehat{w}_N = \frac{1}{S_N}\sum_{k=0}^{N-1}\frac{1}{L_{k+1}}w_k$.
\end{algorithm}

For the convergence rate of  Algorithm \ref{Alg:UMPModel}, we have the following result.

\begin{theorem}\label{thmm1inexact}
For Algorithm \ref{Alg:UMPModel}, it holds 
\begin{equation}\label{rate_VIs}
    \max\limits_{u \in Q}\psi(\widehat{w}_N,u) \leq \frac{LD}{N} + \left(\sqrt{\frac{2D}{3}}\right)^q \frac{3}{N^{q/2}}  \delta, \quad \forall N \geq 1.
\end{equation}
\end{theorem}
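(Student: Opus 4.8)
The plan is to follow the same two-stage route used in the proofs of Theorems~\ref{theo_Adaptive_GM} and~\ref{theo_Adaptive_FGM}: first reduce Algorithm~\ref{Alg:UMPModel} to the Generalized Mirror Prox equipped with an \emph{ordinary} $(\widehat\delta,\widehat L)$-model and quote the known convergence estimate, and then optimize the resulting bound over the free parameter $\rho$. By the Remark following Definition~\ref{Def_Model_VI}, for every $\rho>0$ the $(\delta,L,q)$-model $\psi_{\delta,L,q}(x,y)$ is a $(\widehat\delta,\widehat L)$-model for variational inequalities in the sense of \cite{Stonyakin2021OMS}, with $\widehat L=L+q\rho$ and $\widehat\delta=\frac{(2-q)\delta^{2/(2-q)}}{2\rho^{q/(2-q)}}$. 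Applying the convergence estimate of \cite{Stonyakin2021OMS} for the adaptive Generalized Mirror Prox to this model (which, in the special case $q=0$, where $\widehat L=L$ and $\widehat\delta=\delta$, reduces to their bound $\frac{LD}{N}+3\delta$), one gets
\begin{equation}\label{eq:planGMP}
    \max_{u\in Q}\psi(\widehat w_N,u)\;\le\;\frac{\widehat L D}{N}+3\widehat\delta,\qquad\forall N\ge 1.
\end{equation}

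Once \eqref{eq:planGMP} is available, I would substitute $\widehat L$ and $\widehat\delta$ to obtain, for every $\rho>0$,
\begin{equation*}
    \max_{u\in Q}\psi(\widehat w_N,u)\;\le\;\frac{LD}{N}+\frac{q\rho D}{N}+\frac{3(2-q)}{2}\,\delta^{\frac{2}{2-q}}\rho^{-\frac{q}{2-q}},
\end{equation*}
and minimize the last two terms over $\rho>0$, exactly as in the proof of Theorem~\ref{theo_Adaptive_GM}. The first-order condition gives the minimizer $\rho^{*}=\bigl(\tfrac{3N}{2D}\bigr)^{\frac{2-q}{2}}\delta$; substituting it back, the two $\delta$-dependent terms evaluate to $\frac{3q}{2}\bigl(\sqrt{2D/3}\bigr)^{q}N^{-q/2}\delta$ and $\frac{3(2-q)}{2}\bigl(\sqrt{2D/3}\bigr)^{q}N^{-q/2}\delta$, whose sum is $3\bigl(\sqrt{2D/3}\bigr)^{q}N^{-q/2}\delta=\bigl(\sqrt{2D/3}\bigr)^{q}\tfrac{3}{N^{q/2}}\delta$, which is exactly the second term in \eqref{rate_VIs}.

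I expect the main obstacle to be establishing \eqref{eq:planGMP} itself, i.e.\ verifying that the \emph{adaptive} Generalized Mirror Prox inherits the $\frac{\widehat L D}{N}+3\widehat\delta$ estimate of \cite{Stonyakin2021OMS}. Two points need care. First, the step-size rule \eqref{eqUMP23} is written directly with $\delta$ and the powers $\|w_k-z_k\|^q,\|w_k-z_{k+1}\|^q$, so one must check — using property (v) of Definition~\ref{Def_Model_VI} together with \eqref{eq_1242} — that the inner loop terminates and that the accepted $L_{k+1}$ stays bounded by $2\widehat L$; this is what makes $S_N\ge N/(2\widehat L)$ and, via $\max_{x\in Q}\|x-z_0\|^2\le D$, turns the ``$\frac{\max_{x\in Q}\|x-z_0\|^2}{2S_N}$'' term into $\frac{\widehat L D}{N}$. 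Second, one must re-run the telescoping argument of \cite{Stonyakin2021OMS} with the $\delta\|\cdot\|^q$ error terms coming from \eqref{eqUMP23} and from the abstract $(\delta,q)$-monotonicity \eqref{eq:abstr_monot_delta}, invoking \eqref{eq_1242} to split every such term into a quadratic part (absorbed into the $L_{k+1}$-quadratics already present in the scheme) plus a $\widehat\delta$ part; collecting the $\widehat\delta$ contributions from the three places they arise produces the factor $3$. Everything past \eqref{eq:planGMP} is the routine optimization over $\rho$ already performed in Section~\ref{sect_adaptiveGM}.
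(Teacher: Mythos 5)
Your proposal follows essentially the same route as the paper's own proof: reduce the $(\delta,L,q)$-model to a $(\widehat\delta,\widehat L)$-model for variational inequalities via Remark after Definition~\ref{Def_Model_VI}, invoke the bound $\frac{\widehat L D}{N}+3\widehat\delta$ from \cite{Stonyakin2021OMS} for the Generalized Mirror Prox, and then minimize over $\rho$, obtaining the same $\rho^{*}=\bigl(\tfrac{3N}{2D}\bigr)^{\frac{2-q}{2}}\delta$ and the same final constants. Your additional remarks about verifying that the adaptive line search inherits the cited estimate are reasonable caveats, but the paper itself simply asserts this correspondence and quotes \cite{Stonyakin2021OMS}, so your argument matches the paper's proof.
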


\begin{proof}
Since $\psi_{\delta, L, q}(x,y)$ (which is defined in Definition \ref{Def_Model_VI}) corresponds a $(\widehat{\delta}, \widehat{L})$-model for variational inequalities in the sense of \cite{Stonyakin2021OMS}, then for Algorithm \ref{Alg:UMPModel}, we have
\[
    \max\limits_{u \in Q}\psi(\widehat{w}_N,u) \leq \frac{\widehat{L} \max_{u \in Q} \left\|u -z_0\right\|^2}{N} + 3 \widehat{\delta} \leq  \frac{ \widehat{L} D}{N}  + 3 \widehat{\delta},
\]
where $\widehat{L} = L + q \rho$ and $ \widehat{\delta} = \frac{(2-q) \delta^{\frac{2}{2-q}}}{2 \rho^{\frac{q}{2-q}}}.$ Thus, we have
\[
    \max\limits_{u \in Q}\psi(\widehat{w}_N,u) \leq \frac{LD}{N} + \underbrace{\frac{q D}{N}\rho + \frac{3}{2} (2-q) \delta^{\frac{2}{2-q}} \rho^{\frac{q}{q-2}}}_{:=\varphi_3(\rho)}, \quad \forall \rho >0. 
\]

By minimizing $\varphi_3(\rho)$ over $\rho >0$, we find the optimal value of $\rho$ is  $\rho^* = \left(\frac{3}{2D}\right)^{\frac{2-q}{2}} \frac{\delta}{N^{(q-2)/2}}$. Therefore, we have
 
\begin{align*}
    \max\limits_{u \in Q}\psi(\widehat{w}_N,u)
    & \leq \frac{LD}{N} + \frac{q D}{N}\rho^* + \frac{3}{2} (2-q) \delta^{\frac{2}{2-q}} (\rho^*)^{\frac{q}{q-2}}
    \\& = \frac{LD}{N} + \frac{2^{\frac{q-2}{2}}  3^{\frac{2-q}{2}} D^{q/2} q}{N^{q/2}  }  \delta + 3 (2-q) \left(2^{\frac{q-2}{2}}\right)  \left (\frac{D}{3N}\right)^{q/2}   \delta
    \\& = \frac{LD}{N} + \frac{3q}{2}\left (\frac{2D}{3}\right)^{q/2}   \frac{\delta}{N^{q/2}} +  \frac{3(2-q)}{2}  \left (\frac{2D}{3}\right)^{q/2}   \frac{\delta}{N^{q/2}}
    \\& = \frac{LD}{N} + \left(\sqrt{\frac{2D}{3}}\right)^q \frac{3}{N^{q/2}}  \delta. 
\end{align*}
\end{proof}

\begin{remark}
From \eqref{rate_VIs}, we can see that the convergence rate of Algorithm \ref{Alg:UMPModel} is of order $O\left( \frac{1}{N} + \frac{\delta}{N^{q/2}} \right)$, and the second term on the right-hand side of \eqref{rate_VIs} diminishes for any $q>0$, while in \cite{Stonyakin2021OMS} the rate is of order $O\left( \frac{1}{N} + \delta \right)$, and the second term remains constant equals $\delta$.   
\end{remark}

\section{Inexact $(\delta, L, q)$-model for saddle point problems}\label{Sect:SPP}

In this section, we introduce a higher degree inexact model for saddle point problems. It is known that the solution of variational inequalities reduces the so-called saddle-point problems, in which for a convex in $u$ and concave in $v$ function $f(u,v):\mathbb{R}^{n_1+n_2}\longrightarrow\mathbb{R}$ ($u\in Q_1\subset\mathbb{R}^{n_1}$ and $v\in Q_2\subset\mathbb{R}^{n_2}$, where $Q_1$ and $Q_2$ are convex sets) needs to be found the point $(u_*, v_*)$ such that
\begin{equation}\label{eq31}
f(u_*,v)\leq f(u_*,v_*)\leq f(u,v_*), \quad \forall u\in Q_1 \quad \text{and} \quad  v\in Q_2. 
\end{equation}

Let $Q=Q_1\times Q_2\subset\mathbb{R}^{n_1+n_2}$. For $x=(u,v)\in Q$, we assume that $\|x\|=\sqrt{\|u\|^2+\|v\|^2}$.

In other words, in the saddle point problem we need to find a solution to the min-max problem 
\begin{equation}\label{SPP_problem}
    \min_{u \in Q_1} \max_{v \in Q_2 } f(u, v). 
\end{equation}

Let us denote $x=(u_x,v_x) \in Q,\;y=(u_y,v_y)\in Q$, where $Q=Q_1\times Q_2\subset\mathbb{R}^{n_1+n_2}$. It is well known that for a sufficiently smooth function $f$ with respect to $u$ and $v$, the problem \eqref{SPP_problem} reduces to a variational inequality with the following operator 
\begin{equation}\label{operator_spp}
    G(x)= \left[ \nabla_u f(u_x,v_x), \; - \nabla_v f(u_x,v_x)\right]^\top.
\end{equation}

Therefore, we can use Algorithm \ref{Alg:UMPModel}, to solve the class of saddle point problems with operator \eqref{operator_spp}.

For saddle point problems, we propose some adaptation of the concept of the $(\delta, L, q)$-model for abstract variational inequality.

\begin{definition}\label{DefSaddleModel}
We say that the function $\psi_{\delta, L,q}(x,y)$ $(\psi_{\delta, L, q}:\mathbb{R}^{n_1+n_2}\times\mathbb{R}^{n_1 + n_2}\longrightarrow\mathbb{R})$ is a $(\delta,L,q)$-model for the saddle-point problem \eqref{SPP_problem} if the conditions (ii) -- (v) of Definition \ref{Def_Model_VI} hold and in addition
\begin{equation}\label{eq33}
    f(u_y,v_x)-f(u_x,v_y)\leq-\psi_{\delta, L,q}(x,y) + \delta\|x- y\|^q,  \quad \forall x, y \in Q.
\end{equation}
\end{definition}

From Theorem \ref{thmm1inexact}, we conclude the following result for the saddle point problems.
\begin{theorem}
Let $\delta \geq 0, L >0,$ and $ q \in [0,2)$. If for the saddle-point problem \eqref{SPP_problem} there is a $(\delta,L,q)$-model $\psi_{\delta, L,q}(x,y)$, then after stopping Algorithm \ref{Alg:UMPModel},  we get a point
\begin{equation}\label{eq36}
    \widehat{y}_N=(u_{\widehat{y}_N},v_{\widehat{y}_N}):=(\widehat{u}_N, \widehat{v}_N):=\frac{1}{S_N}\sum_{k=0}^{N-1}\frac{w_{k}}{L_{k+1}},
\end{equation}
for which it holds the following inequality 
\begin{equation}\label{eq37}
    \max_{v\in Q_2}f(\widehat{u}_N, v)-\min_{u\in Q_1}f(u, \widehat{v}_N)\leq \frac{LD}{N} + \left(\sqrt{\frac{2D}{3}}\right)^q \frac{3}{N^{q/2}}  \delta.  
\end{equation}
where $D >0 $, satisfies
\[
    \max_{(u,v) \in Q} \left\|(u, v) - (u_0, v_0)\right\|^2   \leq D. 
\]
\end{theorem}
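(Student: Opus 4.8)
The plan is to reduce the saddle-point statement to the variational-inequality convergence result already proved in Theorem~\ref{thmm1inexact}. First I would recall that, by the standard correspondence described just before Definition~\ref{DefSaddleModel}, the saddle-point problem \eqref{SPP_problem} is an instance of the abstract variational inequality \eqref{VI_prob_1}--\eqref{VI_max}: we take $Q=Q_1\times Q_2$ and let $\psi(x,y)$ be the bilinear-type gap function associated with the operator $G$ in \eqref{operator_spp}, namely $\psi(x,y)=\langle G(y),x-y\rangle$ or, more to the point for this proof, the quantity $f(u_y,v_x)-f(u_x,v_y)$ that is controlled by condition \eqref{eq33}. Since by hypothesis $\psi_{\delta,L,q}(x,y)$ satisfies conditions (ii)--(v) of Definition~\ref{Def_Model_VI}, it is a legitimate $(\delta,L,q)$-model for the variational inequality, and Algorithm~\ref{Alg:UMPModel} can be run verbatim; its output is exactly the averaged point $\widehat w_N$, which in the product notation is the point $\widehat y_N=(\widehat u_N,\widehat v_N)$ displayed in \eqref{eq36}.

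The second step is to apply Theorem~\ref{thmm1inexact} to obtain
\[
    \max_{u\in Q}\psi(\widehat w_N,u)\leq \frac{LD}{N}+\left(\sqrt{\tfrac{2D}{3}}\right)^q\frac{3}{N^{q/2}}\,\delta,
\]
with $D$ chosen so that $\max_{x\in Q}\|x-z_0\|^2\leq D$, which in the saddle-point notation is precisely the stated condition $\max_{(u,v)\in Q}\|(u,v)-(u_0,v_0)\|^2\leq D$. It then remains to translate the left-hand side into the primal-dual gap $\max_{v\in Q_2}f(\widehat u_N,v)-\min_{u\in Q_1}f(u,\widehat v_N)$. For this I would take an arbitrary point $x=(u,v)\in Q=Q_1\times Q_2$ and use the extra property \eqref{eq33} of the saddle-point model, which gives
\[
    f(u,v_{\widehat w_N})-f(u_{\widehat w_N},v)\leq -\psi_{\delta,L,q}(x,\widehat w_N)+\delta\|x-\widehat w_N\|^q\leq \psi(\widehat w_N,x),
\]
where the last inequality uses property (i) of Definition~\ref{Def_Model_VI} in the form $\psi(\widehat w_N,x)\ge \psi_{\delta,L,q}(\widehat w_N,x)\,$ — more carefully, one combines (i) with the abstract $(\delta,q)$-monotonicity (iv) so that $-\psi_{\delta,L,q}(x,\widehat w_N)+\delta\|x-\widehat w_N\|^q\le \psi_{\delta,L,q}(\widehat w_N,x)+\delta\|x-\widehat w_N\|^q\le \psi(\widehat w_N,x)+2\delta\|x-\widehat w_N\|^q$, and the extra $\delta$-term is absorbed into the constants exactly as in the proof of Theorem~\ref{thmm1inexact} (this is why the final bound carries the same $3\delta/N^{q/2}$ shape rather than a smaller one). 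Choosing $u$ to be a near-maximizer of $f(\cdot\,,v_{\widehat w_N})^{-1}$... more precisely choosing $v$ to maximize $f(\widehat u_N,\cdot)$ over $Q_2$ and $u$ to minimize $f(\cdot,\widehat v_N)$ over $Q_1$ (both maxima/minima attained since $Q_1,Q_2$ are compact) and taking the supremum over $x\in Q$ turns the left side into the full gap $\max_{v\in Q_2}f(\widehat u_N,v)-\min_{u\in Q_1}f(u,\widehat v_N)$ while the right side becomes $\max_{x\in Q}\psi(\widehat w_N,x)$, which is bounded by Theorem~\ref{thmm1inexact}. Putting the two estimates together yields \eqref{eq37}.

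The main obstacle I anticipate is the bookkeeping in the translation step: one must be careful that the gap function $\psi$ used in \eqref{VI_prob_1} really dominates $f(u_y,v_x)-f(u_x,v_y)$ up to the $\delta\|x-y\|^q$ slack, and that the additional $\delta$-terms generated by property~(i) and the $(\delta,q)$-monotonicity do not inflate the constant beyond the $\tfrac{3\delta}{N^{q/2}}(\sqrt{2D/3})^q$ appearing in \eqref{rate_VIs}. Tracking these constants is exactly what was done inside the proof of Theorem~\ref{thmm1inexact} (the factor $3$ in front of $\widehat\delta$ comes from the three $\delta$-contributions in properties (i), (iv), (v)), so the cleanest route is to observe that the whole argument leading to \eqref{rate_VIs} already produces a bound on $\max_{u\in Q}\psi(\widehat w_N,u)$ with the correct constant, and then only the final inequality $\max_{v\in Q_2}f(\widehat u_N,v)-\min_{u\in Q_1}f(u,\widehat v_N)\le \max_{u\in Q}\psi(\widehat w_N,u)$, which is immediate from \eqref{eq33} together with properties (i)--(iii), needs to be invoked. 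Everything else is routine and inherited from Section~\ref{Sect:VI}.
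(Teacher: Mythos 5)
Your overall plan --- reducing the saddle-point bound to the Mirror Prox machinery of Section~\ref{Sect:VI} --- is the right spirit (the paper itself derives the theorem ``from Theorem~\ref{thmm1inexact}''), but the bridge you build from the duality gap to $\max_{u\in Q}\psi(\widehat w_N,u)$ does not hold. First, both auxiliary inequalities in your chain are used in the wrong direction: the $(\delta,q)$-monotonicity (iv) gives $-\psi_{\delta,L,q}(x,\widehat w_N)\ \geq\ \psi_{\delta,L,q}(\widehat w_N,x)-\delta\|x-\widehat w_N\|^q$, i.e.\ a \emph{lower} bound, not the upper bound $-\psi_{\delta,L,q}(x,\widehat w_N)\leq\psi_{\delta,L,q}(\widehat w_N,x)$ you claim (that would require $\psi_{\delta,L,q}(x,y)+\psi_{\delta,L,q}(y,x)\geq 0$, which is not assumed); likewise property (i) gives $\psi_{\delta,L,q}(\widehat w_N,x)\geq\psi(\widehat w_N,x)-\delta\|x-\widehat w_N\|^q$, again the reverse of what you need. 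Second, in Definition~\ref{DefSaddleModel} property (i) is not assumed at all and no abstract $\psi$ is specified; condition \eqref{eq33} replaces it and carries the \emph{opposite} sign in front of $\psi_{\delta,L,q}$, so Theorem~\ref{thmm1inexact} cannot be invoked as a black box for a $\psi$ you construct ad hoc (for $\psi(x,y)=\langle G(y),x-y\rangle$ the inequality ``gap $\leq\max_{u}\psi(\widehat w_N,u)$'' is a Minty-type bound that convex--concavity does not provide). Third, you apply \eqref{eq33} at $y=\widehat w_N$, but the algorithm controls $-\tfrac{1}{S_N}\sum_{k}\tfrac{1}{L_{k+1}}\psi_{\delta,L,q}(x,w_k)$, a weighted average over the \emph{second} argument, and $\psi_{\delta,L,q}$ is only convex in its \emph{first} argument, so $-\psi_{\delta,L,q}(x,\widehat w_N)$ is simply not a quantity the analysis bounds; moreover the extra $\delta\|x-\widehat w_N\|^q$ terms you generate sit outside the $\rho$-optimization of Theorem~\ref{thmm1inexact}, would not decay like $N^{-q/2}$, and hence could not reproduce \eqref{eq37}. (There is also a sign slip: you bound $f(u,v_{\widehat w_N})-f(u_{\widehat w_N},v)$, which is the negative of the gap you need.)

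The intended argument goes \emph{inside} the proof of Theorem~\ref{thmm1inexact}: the Mirror Prox recursion for Algorithm~\ref{Alg:UMPModel} yields, for every $x=(u,v)\in Q$, an estimate of the form $-\tfrac{1}{S_N}\sum_{k=0}^{N-1}\tfrac{1}{L_{k+1}}\psi_{\delta,L,q}(x,w_k)\ \leq\ \tfrac{\|x-z_0\|^2}{2S_N}+(\text{error terms in }\widehat\delta)$. One then applies \eqref{eq33} with $y=w_k$ for \emph{each} iterate, so that $f(u_{w_k},v)-f(u,v_{w_k})\leq-\psi_{\delta,L,q}(x,w_k)+\delta\|x-w_k\|^q$, and averages with weights $\tfrac{1}{L_{k+1}S_N}$, using convexity of $f(\cdot,v)$ and concavity of $f(u,\cdot)$ to pull the average inside and obtain $f(\widehat u_N,v)-f(u,\widehat v_N)$ on the left; taking $\max_{v\in Q_2}$, $\min_{u\in Q_1}$ and performing the same minimization over $\rho$ as in the proof of Theorem~\ref{thmm1inexact} gives exactly \eqref{eq37}. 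The missing idea in your proposal is this per-iterate use of \eqref{eq33} combined with the convexity--concavity of $f$, in place of a direct appeal to the statement of the variational-inequality theorem.
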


\section{Numerical experiments}\label{sect_numerical}

In Subsect. \ref{subsect:Universal_mothod}, we showed that Algorithm \ref{alg2_FGM} with a suitable choice of the time-varying sequence $\{\delta_k\}_{k \geq 0}$ is a universal algorithm (see also Example \ref{ex4_inexact}). Therefore we can use Algorithm \ref{alg2_FGM} with a time-varying sequence $\{\delta_k\}_{k \geq 0}$ for the class of non-smooth optimization problems (i.e., when $\nu = 0$), and the convenient interval of the degree $q$ of the proposed inexact model will be $[0, 1)$. From Theorem \ref{theo_Adaptive_FGM}, we can see that for any $q \in (2/3, 1)$, the error of the inexact model does not accumulate. 

To show the advantages and effects of the proposed inexact model with a universal method, a series of numerical experiments were performed for some non-smooth optimization problems with a geometrical nature. 

We compare the performance of the Universal Fast Gradient Method (UFGM), i.e., Algorithm \ref{alg2_FGM}, with a special form of the model $\psi_{\delta, L, q}(x,y) = \langle \nabla f(y), x - y \rangle$ and projected subgradient method using different famous step size rules that are listed in Table \ref{Tab_steps}. 

In our experiments, we take the set $Q$ as the unit ball in $\mathbb{R}^n$ with the center at $0 \in \mathbb{R}^n$. All compared methods start from the same initial point $x_0 = \left(\frac{1}{\sqrt{n}}, \ldots, \frac{1}{\sqrt{n}}\right) \in Q \subset \mathbb{R}^n$. In the AdaGrad algorithm, we take $\alpha = 10^{-8}$, and there is an assumption that $\|x_0 - x_*\|^2 \leq 2 \theta_0^2$, thus for the taken feasible set $Q$ in our experiments, we can take $\theta_0 = 1/\sqrt{2}$.  

The comparison of the methods is done in terms of the difference $\hat{f}_k - f_{\text{min}}$, where $\hat{f}_k$ denotes the value of the objective function $f$ at the averaged points (namely at $\hat{x}_k = \frac{1}{k} \sum_{j = 1}^{k} x^k$, for all cases in Table \ref{Tab_steps}, except to the case of ''quad grad'', where in this case we have $\hat{x}_k = \left(\sum_{j = 1}^{k} \gamma_k\right)^{-1} \sum_{j=1}^{k} \gamma_k x^k $, and except to the case of ''AdaMirror'' (adaptive mirror descent method with weighting scheme), where in this case we have $\hat{x}_k = \frac{1}{\sum_{j= 1}^{k} \gamma_j^{-m}} \sum_{j = 1}^{k} \gamma_j^{-m} x_j $ with $m \geq -1$, see \cite{moh2024asmaa} for more details about AdaMirror) and $f_{\text{min}}$ denotes a minimal value of the objective function computed by SciPy, a package for solving many different classes of optimization problems (when the dimension of the space $\mathbb{R}^n$ is not big).  

\begin{table}[htp]
\begin{tabular}{l l l }
\hline
Abbreviation & Step sizes  & Formula of $\gamma_k$ \\ \hline\hline
constant step & constant step size \cite{Boyd2004Subgradient} & $\gamma_k = 0.1,$ \\ \hline
fixed length  & fixed step length \cite{Boyd2004Subgradient} & $\gamma_k = \frac{0.2}{\|\nabla f(x^k)\|_*},$ \\ \hline
nonsum        &
\shortstack{non-summable \\ diminishing step \cite{Boyd2004Subgradient}}    & $\gamma_k = \frac{0.1}{\sqrt{k}},$ \\ \hline
sqrsum nonsum & 
\shortstack{square summable but \\ not summable step \cite{Boyd2004Subgradient}}
  & $\gamma_k = \frac{0.5}{k},$ \\ \hline
quad grad     & \shortstack{quadratic of the norm \\ of the gradient \cite{FedorRolandbook}}  & $\gamma_k = \frac{0.2}{\|\nabla f(x^k)\|_*^2},$ \\ \hline
AdaGrad       & \shortstack{AdaGrad \\ algorithm \cite{Duchi2011Adaptive}}   & $\gamma_k =  \frac{\theta_0}{\sqrt{\sum_{j = 1}^{k} \|\nabla f(x^j)\|_*^2 + \alpha}}, $ \\ \hline
Polyak step   & Polyak step size \cite{Polyak1987book} & $\gamma_k = \frac{f(x^k) - f^*}{\|\nabla f(x^k)\|_*^2},$  \\ \hline
AdaMirror & \shortstack{Adaptive step size \cite{moh2024asmaa} \\
with weighting scheme} & $\gamma_k = \frac{\sqrt{2}}{\|\nabla f(x^k)\|_* \sqrt{k}}.$ \\ \hline\hline
\end{tabular}
\caption{The used step sizes in the projected subgradient method.}
\label{Tab_steps}
\end{table}

\subsection{Best approximation problem}

The considered problem in this subsection is connected with the problem of the best approximation of the distance between a point and a given set $Q\subset \mathbb{R}^n $. For this problem, let $A \notin  Q$ be a given point, we need to solve the following optimization problem
\begin{equation}\label{best_approx_prob}
    \min_{x \in Q}\left\{ f(x) : = \left\|x - A\right\| \right\}. 
\end{equation}

The point $A$ is randomly generated from a uniform distribution over $[0, 1)$, such that $\|A\|_2 = 10$, therefore the distance between the point $A$ and the considered unit ball $Q$ is equal to $9$, i.e., $f^* = 9$. Here, we mention that this problem is constructed to use the Polyak step size, which requires knowing the optimal value $f^*$. 

The comparison results, for problem \eqref{best_approx_prob} with $n = 5000$  are presented in Fig. \ref{fig_best_approx_n1000}. In this figure, $\hat{f}_k$  denotes the value of the objective function $f$ at the averaged points in each iteration of all compared algorithms. 

From Fig. \ref{fig_best_approx_n1000}, we can see that UFGM is the best, where the difference between its performance and the rest of the algorithms with steps in Table \ref{Tab_steps} is clear and significant.

\begin{figure}[htp]
\minipage{0.95\textwidth}
\includegraphics[width=\linewidth]{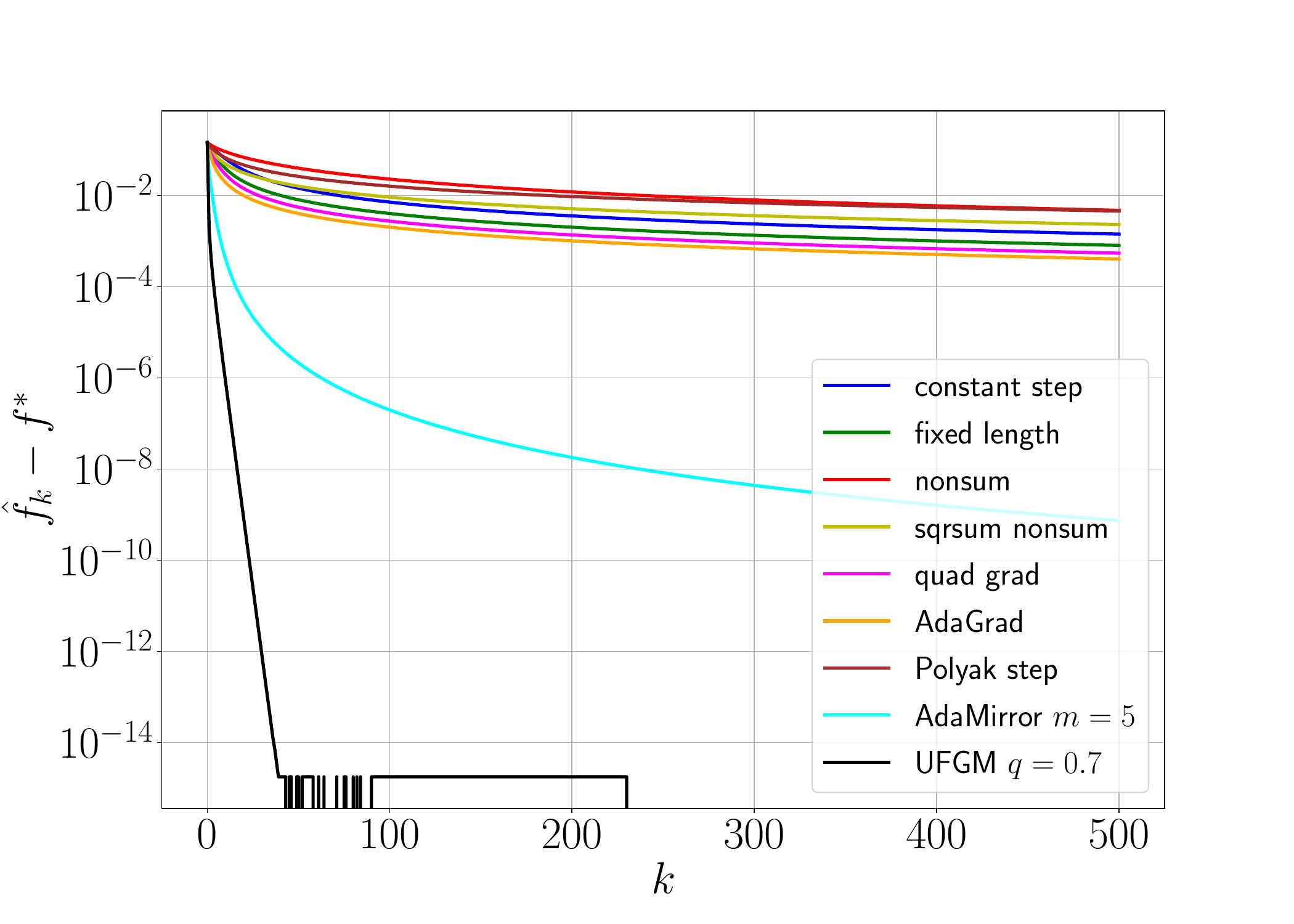}
\endminipage
\caption{Results of UFGM and projected subgradient method using different step size rules listed in Table \ref{Tab_steps}, for problem  \eqref{best_approx_prob} with $n=5000$.}
\label{fig_best_approx_n1000}
\end{figure}

\subsection{Fermat–Torricelli–Steiner problem.}

Let $A_j \in \mathbb{R}^n, j = 1, \ldots, T$ be a given set of $T$ points, and let us consider an analogue of the well-known Fermat–Torricelli–Steiner problem. For this we need to solve the following optimization problem
\begin{equation}\label{Fermat_prob}
    \min_{x \in Q}\left\{ f(x) : = \frac{1}{T}\sum_{j = 1}^{T} \left\|x -  A_j \right \| \right\}. 
\end{equation}

The points $A_j, j = 1, 2, \ldots, T$ are randomly generated from a uniform distribution over $[0, 1)$. We run all algorithms (except the algorithm with Polyak step size since we cannot know the optimal value $f^*$ for problem \eqref{Fermat_prob}), with the same initial point $x_0 = \left(\frac{1}{\sqrt{n}}, \ldots, \frac{1}{\sqrt{n}}\right) \in Q\subset \mathbb{R}^n$. 

The comparison results, for problem \eqref{Fermat_prob} with $n = 200$ and $T= 25$,   are presented in Fig. \ref{fig_Fermat_n200}. The reason here for taking $n = 200, T = 25$ (i.e., not so big) is that we calculated the value $f_{\text{min}}$ using SciPy which does not work well for large values of $n$ and $T$. In Fig. \ref{fig_Fermat_n200}, $\hat{f}_k$  denotes the value of the objective function $f$ at the averaged points in each iteration of all compared algorithms. From this figure, we can see that UFGM is the best. Note that the difference between its performance and the rest of the algorithms (except the adaptive mirror descent method with weighting scheme (AdaMirror), where the difference is clear at the first iterations, and then after a determined number of iterations there is no difference between their efficiency) is obvious.

\begin{figure}[htp]
\minipage{0.95\textwidth}
\includegraphics[width=\linewidth]{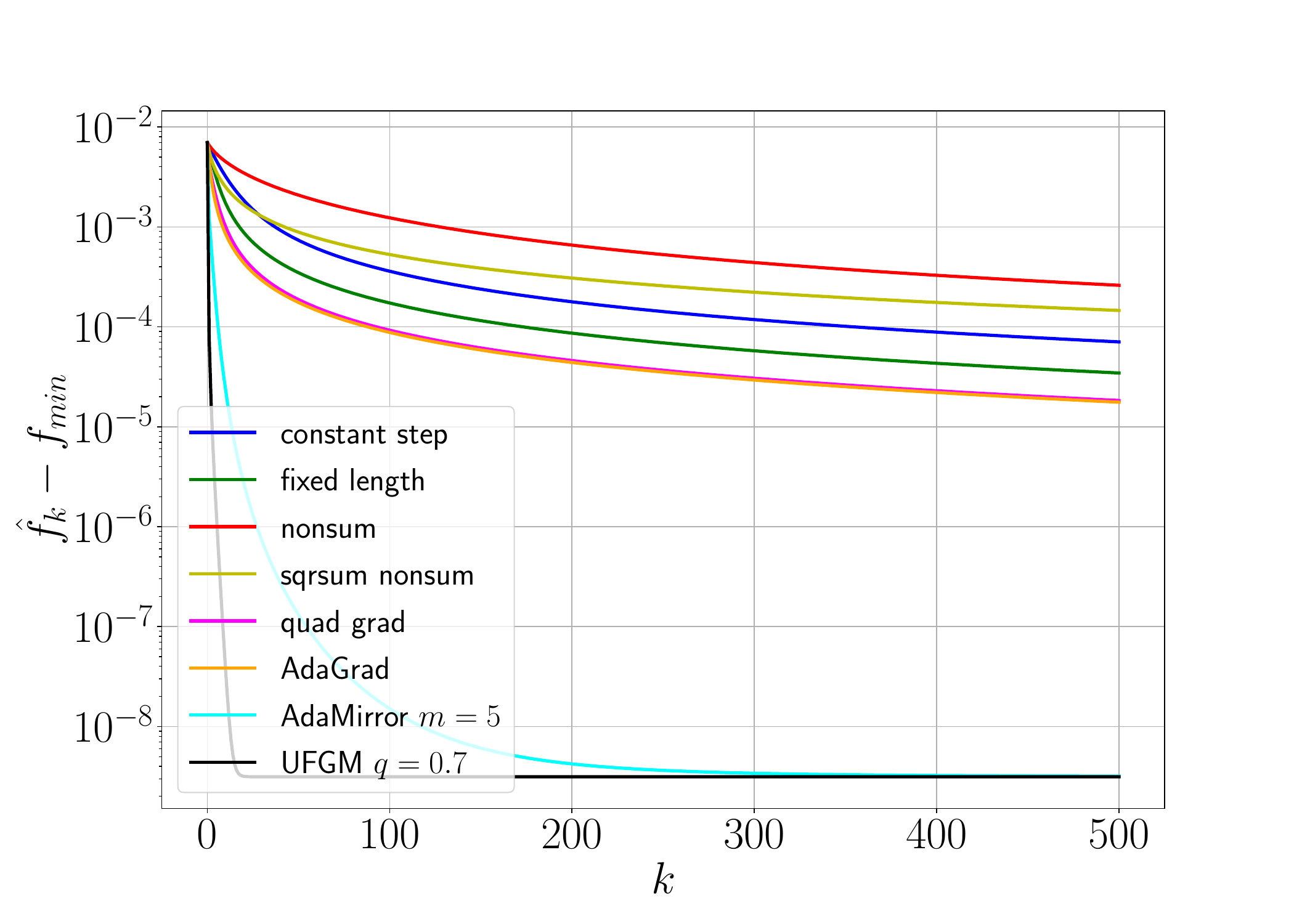}
\endminipage
\caption{Results of  UFGM and projected subgradient method using different step size rules listed in Table \ref{Tab_steps}, for problem \eqref{Fermat_prob} with $n=200, T = 25$.}
\label{fig_Fermat_n200}
\end{figure}

\section{Conclusion}\label{sect:Conclusion}
In this paper, we introduce an inexact higher degree $(\delta, L, q)$-model for convex and non-convex optimization problems.  We obtain convergence rates for an adaptive inexact gradient method $O\left(\frac{1}{k} + \frac{\delta}{k^{q/2}}\right)$ and an adaptive inexact fast gradient method (FGM) $O\left(\frac{1}{k^2} + \frac{\delta}{k^{(3q-2)/2}}\right)$ for convex optimization problems with this model.  For the gradient method, the coefficient of $\delta$ diminishes with $k$, and for the fast gradient method, there is no error accumulation for $q \geq 2/3$. By using the technique of restarting the FGM with the proposed higher degree inexact model we obtain a convergence rate of the restarted method for strongly convex problems. Also, using the FGM we construct a universal fast gradient method (UFGM), this method allows us to solve optimization problems with different levels of smoothness including non-smooth problems. For the strongly convex optimization problems, we introduce an inexact higher degree $(\delta, L, \mu, q)$-oracle and obtain a convergence rate of the inexact gradient method with this oracle without using the technique of restarting any other algorithms. In addition to the minimization problems, we introduce an inexact higher degree $(\delta, L, q)$-model for variational inequalities and saddle-point problems and obtain convergence rate $O\left(\frac{1}{N} + \frac{\delta}{k^{q/2}}\right)$ for adaptive versions of the Generalized Mirror–Prox algorithm for problems with this model, which improves the well-known rate $O\left(\frac{1}{k} + \delta\right)$ for any $q > 0 $, where there is no error accumulation for $q > 0$. At the last, we perform some numerical experiments for testing the proposed UFGM with the proposed inexact model for non-smooth optimization problems with a geometrical nature, such as the best approximation problem and the Fermat-Torricelli-Steiner problem.

As a future work, we plan to study the accelerated and non-accelerated stochastic gradient descent methods, as in \cite{Dvinskikh2020Accelerated}, for solving finite sum optimization problems (see Example \ref{ex1_inexact}), and deriving estimates of the rate of convergence in the proposed higher degree inexact model. 

\subsection*{Acknowledgements}
This work was supported by a grant for research centers in the field of artificial intelligence, provided by the Analytical Center for the Government of the Russian Federation in accordance with the subsidy agreement (agreement identifier 000000D730324P540002) and the agreement with the Moscow Institute of Physics and Technology dated November 1, 2021 No. 70-2021-00138.

\end{document}